\newtheorem{theorem}{Theorem}[section]
\newtheorem{proposition}[theorem]{Proposition}
\newtheorem{lemma}[theorem]{Lemma}
\newtheorem{corollary}[theorem]{Corollary}
\theoremstyle{definition}
\newtheorem{definition}[theorem]{Definition}
\newtheorem{remark}[theorem]{Remark}
\newcommand{\lra}{\longrightarrow}
\renewcommand{\Im}{\operatorname{Im}}
\newcommand{\C}{\mathbb{C}}
\newcommand{\alfa}{\alpha}
\newcommand{\alf}{\alpha}
\newcommand{\om}{\omega}
\newcommand{\M}{\mathsf{M}}
\newcommand{\Mg}{{\mathsf{M}_g}}
\newcommand{\Mgg}{{\overline{\mathsf{M}}_g}}
\newcommand{\A}{\mathsf{A}}
\newcommand{\Z}{{\mathbb{Z}}}
\newcommand{\debar}{\overline{\partial } }
\newcommand{\tcc}[2]{\left[{#1 \above 0pt #2}\right]}
\numberwithin{equation}{section}
\begin{document}

\baselineskip=15pt

\title[Projective structures given by theta]{On the projective structures given by theta}

\author[I. Biswas]{Indranil Biswas}

\address{Department of Mathematics, Shiv Nadar University, NH91, Tehsil
Dadri, Greater Noida, Uttar Pradesh 201314, India}

\email{indranil.biswas@snu.edu.in, indranil29@gmail.com}

\author[L. Vai]{Luca Vai}
	
\address{Dipartimento di Matematica, Universit\`a di Pavia, via
Ferrata 5, I-27100 Pavia, Italy}
	
\email{luca.vai01@universitadipavia.it}

\subjclass[2010]{14H10, 14H42, 14K25, 53B10}

\keywords{Theta function, canonical bidifferential, projective structure, Fay's trisecant formula}

\date{}

\begin{abstract}
Given a compact Riemann surface $C$ and its Jacobian $J_C$, the line in $H^0(J_C,\, 2\Theta)$
orthogonal to the sections of $2\Theta$ vanishing at $0\, \in\, J_C$ produces a natural projective structure
on $C$. We investigate the properties of this projective structure.

\end{abstract}

\maketitle

\tableofcontents

\section{Introduction}\label{sec:introduction}

A projective atlas on a Riemann surface $Y$ is given by a covering of $Y$ by holomorphic coordinate charts 
such that all the transition functions are M\"obius transformations. Two such coverings are called equivalent 
if their union also satisfies this property. A projective structure on $Y$ is an equivalence class of such 
coverings. A projective structure is called canonical if its construction does not require making choices. 

Every Riemann surface admits at least one projective structure. For example, fix a compact Riemann surface $Y$ 
of genus at least two and consider the universal covering $u\,:\,\mathbb{H}\,\lra\, Y$ of it
given by the Poincar\'e-Koebe 
Uniformization Theorem. The covering $u$ is well defined uniquely up to a M\"obius transformation.
Since the automorphisms of $\mathbb H$ are M\"obius transformations, any two coordinate functions on
$Y$ given by a locally defined inverse of $u$ differ by a M\"obius transformation. Consequently,
a covering of $Y$ by such coordinate charts produces a projective structure on $Y$.
Therefore, the Uniformization Theorem 
gives a canonical projective structure on any compact Riemann surface.

As far as we know the above one was the only 
known canonical projective structure until another one was constructed, independently in \cite{EPG}, 
\cite{Lo}, using Hodge theoretic methods. The question was asked whether this projective structure coincides 
with the one given by the Uniformization theorem. This question was resolved in \cite{bcfp}, where it was shown that 
they are actually different. It may be mentioned that the Hodge theoretic projective structure is defined
in an indirect way 
exploiting a canonical bijection between the set of projective structures on $Y$ and a certain affine subspace of 
$H^0(3\Delta, \, K_{Y\times Y}\otimes \mathcal{O}_{Y\times Y}(2\Delta)\vert_{3\Delta})$, where $\Delta\,\subset\, 
Y\times Y$ is the reduced diagonal. The details on this correspondence will be
recalled in Section \ref{section:ps}. 

Subsequently, again using this canonical identification between projective structures
and certain sections over $3\Delta$, another canonical projective structure was 
constructed in \cite{BGV}. This third projective structure is constructed from the line in the space of 
sections of the second power of the theta line bundle on the Jacobian of $Y$ that is orthogonal to the sections 
vanishing at the origin of the Jacobian. We will outline the construction of this projective structure,
together with the construction of the aforementioned Hodge theoretic projective structure,
in Section \ref{section:ps}.

Our aim here is to study the canonical projective structure given by the second power of the theta 
bundle, which we will call the \emph{theta projective structure}. In particular, we will show that this 
projective structure differs from the Hodge theoretic one for every genus. In \cite{BGV} it was shown that 
they are different in the very special case of elliptic curves. To complete the picture, we show this new 
projective structure is also different from the one given by the Uniformization theorem for every genus; this 
is shown in Remark \ref{rem:uniformization}.

To obtain these results, our main strategy is to compute the differential of a family of projective 
structures. This can be done as follows: consider the moduli space $\mathcal{P}_g$ parametrizing classes of 
pairs consisting of a compact genus $g$ Riemann surface and a projective structure on it compatible with the 
complex structure. One can endow $\mathcal P_g$ with the structure of a complex orbifold in such a way that 
the natural forgetful map $\mathcal{P}_g\, \longrightarrow\, \M_g$ is a holomorphic torsor for the holomorphic 
cotangent bundle $\Omega_{\M_g}^{1}$ of the moduli space $\M_g$ of smooth
projective curves of genus $g$. Canonical projective 
structures depending smoothly on the moduli point $[C]$ correspond to $C^\infty$ orbifold sections of the 
torsor $\mathcal{P}_g$ over $\M_g$. Consequently, the differential of a smooth section $\beta:\M_g\,\lra\,
\mathcal{P}_g$ can be interpreted as a $(1,\,1)$ form on $\M_g$:
$$\debar \beta \,\in \, C^\infty(\M_g,\, \Omega_{\M_g}^{1,1}).$$

The connection between (1,\,1) forms and canonical projective structures is actually quite intricate. In fact 
for $g\,\geq\, 5$ it has been proven that the $(1,\,1)$ form $\debar \beta$ determines completely the section 
$\beta$; see \cite{BFPT,BGT,FPT}. To give another example of this connection, denote by $\beta^u$ the section 
of $\mathcal P_g\,\longrightarrow\,\M_g$ corresponding to the projective structure given by the Uniformization 
theorem, and denote by $\beta^\eta$ the section of $\mathcal P_g\, \longrightarrow\, \M_g$ corresponding to 
the Hodge theoretic projective structure. By a result of Zograf and Takhtadzhyan, \cite{ZT}, $\debar \beta^u$ 
is a multiple of the Weil-Petersson form $\omega_{WP}$ on $\M_g$. In \cite{bcfp} it was proved that $\debar 
\beta^\eta$ is a multiple of the pull-back of the Siegel form on the moduli space $A_g$ --- of principally 
polarized abelian varieties of dimension $g$ --- by the Torelli map
$$j\,:\, \M_g \,\lra\, \A_g,$$
that sends a curve to its Jacobian equipped with polarization given by a theta divisor.
Since the Weil-Petersson metric is K\"ahler (in particular, it is nondegenerate) and the
differential of the Torelli map is not injective at the hyperelliptic locus, it follows immediately
that $\beta^\eta \,\neq\, \beta^u$. 

Denote by $\beta^\theta$ the section of $\mathcal P_g\, \longrightarrow\, \M_g$ corresponding to the 
projective structure constructed in \cite{BGV} using the sections of the second power of theta. In this paper 
we establish a connection between $\beta^\theta$ and the second order Theta nullwert map
$$\Theta_g\ :\ \A_g(2,4)\ \lra\ \mathbb{P}^{2^g-1}.$$
Here $\A_g(2,4)$ denotes the moduli space of principally polarized abelian varieties with a theta structure. 
It is actually the quotient of the Siegel space ${\mathbb H}_g$ by a certain finite index subgroup of
$\operatorname{Sp}(2g, \mathbb Z)$ and hence
it has a finite covering map to $\A_g$. 
The map $\Theta_g$ has been deeply studied in relation to the Schottky problem.

Section \ref{sect:descend} is devoted to the study of some aspects of the nullwert map $\Theta_g$, and may
be of independent interest.

We prove that the pull-back $\Theta_g^* \omega_{FS}$ of the Fubini-Study metric on $\mathbb P^{2^g-1}$ to 
$\A_g(2,4)$ has better invariance properties than expected, and in fact it actually descends to a $(1,\,1)$ 
form on $\A_g$. This fact plays a very crucial role in this paper. Indeed, as we show in Section 
\ref{sec:theta-proj-struct}, the restriction of $\Theta_g^\ast \omega_{FS}$ to $\M_g$ coincides
--- up to a multiple --- with $\debar \beta^\theta$. This is interesting because it implies that some
of the structure of $\beta^\theta$ can be extended naturally to $\A_g$, and they can be studied there.

Our first main result is the following (See Theorem \ref {prop:debarisfs} below):

\begin{theorem}
The equality
$$
\overline{\partial}\beta^\theta\ =\ 8\pi j^\ast \Theta_g^\ast \omega_{FS}
$$
holds.
\end{theorem}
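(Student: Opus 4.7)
The plan is to compute both sides locally on $\M_g$, pulled back to a neighborhood of the period matrix $\tau$ via the Torelli map $j$, and to match them through the heat equation for theta functions combined with Rauch's variational formula.

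First, I would unpack the construction of $\beta^\theta$ from \cite{BGV}. By definition, $\beta^\theta([C])$ is built from the line in $H^0(J_C,\,2\Theta)$ orthogonal to the hyperplane of sections vanishing at the origin. Using the classical basis of second-order theta functions $\Theta[\sigma](z,\tau)$ for $\sigma\in (\Z/2\Z)^g$, the distinguished line and its "size" at $0$ are controlled by the nullwert vector $\bigl(\Theta[\sigma](0,\tau)\bigr)_\sigma$, which is precisely the image of the map $\Theta$. I would then translate the recipe of \cite{BGV} into a statement that the local Schwarzian of $\beta^\theta$ against a reference projective structure can be written as an explicit expression in the $\Theta[\sigma](0,\tau)$ and their $z$-derivatives at $0$.

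Next, I would compute $j^\ast \Theta^\ast \omega_{FS}$. After the descent result of Section \ref{sect:descend}, this form is unambiguously defined on $\M_g$, and near a chosen $\tau$ it is given by
\[
j^\ast \Theta^\ast \omega_{FS}\ =\ \tfrac{i}{2\pi}\,\debar\del \log \sum_\sigma \bigl|\Theta[\sigma](0,\tau)\bigr|^2.
\]
The plan is to evaluate this $(1,1)$ form on a pair of tangent vectors to $\M_g$ at $[C]$, represented by Beltrami differentials $\mu,\nu$. The Rauch formula $\delta \tau_{ij}=\omega_i\omega_j$ converts tangent vectors on $\M_g$ into variations of $\tau$, while the heat equation
\[
4\pi i\,\frac{\partial \Theta[\sigma]}{\partial \tau_{ij}}\ =\ \frac{\partial^2 \Theta[\sigma]}{\partial z_i\,\partial z_j}
\]
converts the $\tau$-derivatives of the nullwerts into second $z$-derivatives at $0$, i.e.\ exactly the quantities that enter the expression for $\beta^\theta$ obtained in the first step.

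Finally, I would assemble the two computations. The $\debar$ of a section of the torsor $\mathcal{P}_g\to\M_g$ measures failure of holomorphicity, so it is computed as the $\debar$ of the quadratic differential representing $\beta^\theta-\beta^{\mathrm{ref}}$ for any holomorphic reference section $\beta^{\mathrm{ref}}$. With the formula from step one this $\debar$ becomes $\debar\del$ of $\log\sum_\sigma|\Theta[\sigma](0,\tau)|^2$, up to a universal scalar produced by the $4\pi i$ in the heat equation and the $\tfrac{i}{2\pi}$ in $\omega_{FS}$, which combine to $8\pi$. The hardest step will be the first one: extracting from \cite{BGV} a formula for $\beta^\theta$ in terms of the $\Theta[\sigma](0,\tau)$ that is sharp enough to track constants. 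Once this is done, the heat equation and Rauch's formula make the comparison essentially tautological, and the descent result of Section \ref{sect:descend} is what guarantees that the comparison is taking place between objects canonically defined on $\M_g$ rather than on any finite cover.
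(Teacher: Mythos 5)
Your plan follows essentially the same route as the paper's proof: the explicit formula from \cite{BGV} expressing $\sigma$ through the nullwerts $\theta_u(0;\tau)$ and their second $z$-derivatives, the heat equation to trade these for $\tau$-derivatives of $w=\sum_u|\theta_u(0;\tau)|^2$, the Rauch-type identification $j^\ast d\tau_{ij}=-\omega_i\omega_j$, and the holomorphicity of the canonical bidifferential $\Omega$ as the reference section whose $\overline{\partial}$ vanishes. The one point to fix is the normalization: the paper takes $\omega_{FS}=\frac{\sqrt{-1}}{2}\partial\overline{\partial}\log|Z|^2$, whereas with your $\frac{\sqrt{-1}}{2\pi}$ convention the constant would come out as $8\pi^2$ rather than $8\pi$.
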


As an application of this result, using a simple degeneration
technique we prove that the theta projective structure of \cite{BGV} is
different from the Hodge theoretic projective structure of \cite{EPG}. This is the second main result
of the paper (See Theorem \ref{teo:differ} below):

\begin{theorem}
The projective structure $\beta^\theta$ differs generically from the projective structure $\beta^\eta$, in every
genus.
\end{theorem}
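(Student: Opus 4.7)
The approach is by contradiction, degenerating along a boundary divisor of $\Mgg$ and reducing to the genus-one case settled in \cite{BGV}.

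Suppose that the two sections $\beta^\theta$ and $\beta^\eta$ agree on some nonempty open subset $U \subset \M_g$. Since $j^\ast \Theta^\ast \omega_{FS}$ and $j^\ast \omega_{\mathrm{Sieg}}$ are real-analytic $(1,1)$-forms on the complex manifold $\M_g$, Theorem \ref{prop:debarisfs} together with the formula of \cite{bcfp} for $\overline{\partial}\beta^\eta$ forces the global identity
$$
8\pi\, j^\ast \Theta^\ast \omega_{FS} \,=\, c_g\, j^\ast \omega_{\mathrm{Sieg}} \qquad \text{on } \M_g,
$$
where $c_g$ is the explicit nonzero constant from \cite{bcfp}.

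Next I would degenerate. Take a one-parameter family $\{C_t\}$ inside $\Mgg$ whose central fibre is a reducible nodal curve $C_0 = E \cup_p R$, with $E$ a generic smooth elliptic curve and $R$ a generic smooth curve of genus $g-1$ meeting $E$ at a single node. Under the extended Torelli morphism, $\operatorname{Jac}(C_0) = E \times \operatorname{Jac}(R)$ as a principally polarized abelian variety. On the product stratum $\A_1 \times \A_{g-1}$, the order-two theta functions on $E \times \operatorname{Jac}(R)$ factor as products of order-two theta functions in each set of variables, so the second-order theta null map factorizes as $\Theta_g|_{\A_1 \times \A_{g-1}} = \sigma \circ (\Theta_1 \times \Theta_{g-1})$, where $\sigma$ is the Segre embedding. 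The standard Segre pullback formula for the Fubini-Study metric and the additive splitting of the Siegel form on products then give
$$
\Theta_g^\ast \omega_{FS}\big|_{\A_1 \times \A_{g-1}} = \operatorname{pr}_1^\ast \Theta_1^\ast \omega_{FS} + \operatorname{pr}_2^\ast \Theta_{g-1}^\ast \omega_{FS}, \qquad \omega_{\mathrm{Sieg}}^{(g)}\big|_{\A_1 \times \A_{g-1}} = \operatorname{pr}_1^\ast \omega_{\mathrm{Sieg}}^{(1)} + \operatorname{pr}_2^\ast \omega_{\mathrm{Sieg}}^{(g-1)}.
$$

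Taking the $t \to 0$ limit of the global equality along such families and restricting to the $\A_1$ factor yields $8\pi\, \Theta_1^\ast \omega_{FS} = c_g\, \omega_{\mathrm{Sieg}}^{(1)}$ on $\A_1$. Applied intrinsically in genus one, the same scheme produces the analogous identity with constant $c_1$; since the constants $c_h$ from \cite{bcfp} depend genuinely on $h$, we have $c_1 \neq c_g$, a contradiction. Alternatively, one may invoke directly the main result of \cite{BGV}, which provides the base case $g=1$ and independently rules out the proportionality on $\M_1$. The main technical difficulty is the rigorous verification of the Segre factorization of the second-order theta null map with compatible level-$(2,4)$ structures on the product stratum, together with the commutativity of the boundary limit with the Torelli pullback for the two $(1,1)$-forms; these rest on the product decomposition of order-two theta functions on $E \times \operatorname{Jac}(R)$ and the extension of $\Theta$ through the toroidal boundary of $\overline{\A}_g$.
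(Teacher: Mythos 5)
Your degeneration scheme is essentially the paper's: restrict the putative identity $8\pi\, j^\ast\Theta^\ast\omega_{FS}=\pi\, j^\ast\omega_S$ to the locus $\{[C'\cup_p E]\}\subset\Delta_1-\Delta_0$, use the factorization $\theta_u(0;\iota(\tau))=\theta_{u_1}(0;\tau)\,\theta_{u_2}(0;\tau')$ of second-order theta constants at block-diagonal period matrices to show that both forms restrict to their genus-one counterparts (this is Lemma \ref{lemu}, Corollary \ref{cor:thetaisometry} and \eqref{eq:sisometry}), and land on $\mathbb{H}_1$. Your Segre formulation is a mild generalization of the paper's linear-inclusion argument (the paper fixes $\tau'$, so the induced map $\mathbb{P}^1\to\mathbb{P}^{2^g-1}$ is linear and unitary up to scale) and is fine; the worry about extending $\Theta$ through the toroidal boundary of $\overline{\A}_g$ is unnecessary, since $\overline{j}$ maps $\Mgg-\Delta_0$ into $\A_g$ itself and a simple continuity argument replaces your $t\to 0$ limit.

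The genuine gap is in the base case. First, the constant in $\overline{\partial}\beta^\eta=\pi\, j^\ast\omega_S$ (Proposition \ref{prop:debareta}) does not depend on the genus, nor does the $8\pi$ of Theorem \ref{prop:debarisfs}; so your primary route to a contradiction, ``$c_1\neq c_g$'', evaporates: restriction to $\A_1$ produces exactly the identity $8\pi\,\omega_1^\Theta=\pi\,\omega_1^S$ that the intrinsic genus-one statement would produce, with the same constants on both sides. Second, your fallback of citing the main result of \cite{BGV} does not close the argument either: that result says the two \emph{sections} $\beta^\theta$ and $\beta^\eta$ differ on $\M_{1,1}$, which does not imply that their Dolbeault differentials differ --- two distinct sections of the torsor can differ by a \emph{holomorphic} section of $\pi_\ast\mathcal{K}^{\otimes 2}$ and then have equal $\overline{\partial}$. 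What the reduction actually requires is the pointwise inequality of $(1,1)$-forms $8\pi\,\omega_1^\Theta\neq\pi\,\omega_1^S$ on $\mathbb{H}_1$, a strictly stronger statement than the one in \cite{BGV}; the paper establishes it by a direct computation at $\tau=\sqrt{-1}$ (Lemma \ref{lemma:differdebar1}), reducing it to the elementary inequality $4\pi c\neq 7b$ between theta-constant sums. Without this computation (or some substitute for it) your proof does not terminate.
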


The paper is organized as follows. In Section \ref{section:ps} we recall the basic constructions needed here.
In the first part of Section \ref{section:ps} it is recalled how a projective structure can be seen as an 
algebraic object and we explain families of projective structures. Afterwards, we give the 
definition of the Hodge theoretic projective structure $\beta^\eta$ and the definition of the projective 
structure $\beta^\theta$ introduced in \cite{BGV}. In Section \ref{sect:descend} it is shown that $\Theta_g^\ast 
\omega_{FS}$ actually descends to $\A_g$. This is the content of Proposition \ref {prop:descends}. To get to this 
result we relate $\Theta_g$ and another theta nullwert map, denoted by $\theta^2$. It is shown that, up to a 
unitary projective transformation, $\theta^2$ coincides with the composition of $\Theta_g$ with the second Veronese 
embedding. Using this fact, the proof of Proposition \ref {prop:descends} consists of a careful use of the 
Theta Transformation Formula. In Section \ref{sec:theta-proj-struct}, the differential $\debar \beta^\theta$ 
is computed explicitly. In this Section we also compute $\debar \beta^\eta$ in a new way. In Section 
\ref{differ-debar} it is proved that $\beta^\theta \,\neq\, \beta^\eta$ for every genera. This is a consequence of 
Theorem \ref{teo:differ}, which establishes that $\debar \beta^\theta\,\neq \,\debar \beta^\eta$. The strategy of 
the proof is the following: We recall that in the boundary of $\M_g$ (in the Deligne--Mumford compactification) 
one can find \lq\lq copies\rq\rq of $\M_{1,1}$. Indeed, in the boundary we can find subspaces of the form 
$\{[C\cup_p E]\}_{[E]\,\in\, \M_{1,1}}$, where $C$ is a curve of genus $g-1$ and $C\cup_p E$ is the (singular) 
curve that is obtained identifying $p\,\in\, C$ with the origin of $E$. We show that comparing $\debar 
\beta^\theta$ and $\debar \beta^\eta$ over $\{[C\cup_p E]\}_{[E]\,\in\, \M_{1,1}}$ is
actually equivalent to comparing $\debar 
\beta^\theta$ and $\debar \beta^\eta$ over $\M_{1,1}$. Then, the proof of Theorem \ref{teo:differ} 
is reduced to the case of elliptic curves. This case was already considered in \cite{BGV} where it was proved 
that $\beta^\eta \,\neq\, \beta^\theta$ on $\M_{1,1}$. Here we need a slightly stronger result, that is, we 
need to prove the differentials of the sections are different and not just the sections themselves. This is 
the content of Lemma \ref{lemma:differdebar1}.

\section*{Acknowledgements}

We thank the referees for their careful reading of the manuscript and for their helpful suggestions. The 
second author is deeply thankful to Alessandro Ghigi for many helpful discussions, and for his corrections to 
the manuscript. The second author would also like to thank Riccardo Salvati Manni and Bert van Geemen for very 
useful emails on the topic of theta functions. The second author was partially supported by INdAM-GNSAGA, by 
MIUR PRIN 2022: 20228JRCYB, ``Moduli spaces and special varieties'' and by FAR 2016 (Pavia) ``Variet\`a 
algebriche, calcolo algebrico, grafi orientati e topologici''. The first author is partially supported by a J. 
C. Bose Fellowship (JBR/2023/000003).

\section{Families of projective structures}\label{section:ps}

We begin by introducing notation that will be used throughout here. Firstly, $C$ will denote a compact
Riemann surface of genus $g$; its canonical line bundle will be denoted by $K_C$. For $i\,=\,1,\, 2$,
$$
p_i\, :\, S\, :=\, C\times C\, \longrightarrow\, C
$$
is the natural projection to the $i$-th factor, and $$\Delta \,:=\, \{(x,\, x)\,\,\big\vert\,\, x\, \in\, C\}
\, \subset\, C\times C\,=\, S
$$
is the reduced diagonal. Let
\begin{equation}\label{ei1}
K_S\ \cong\ (p_1^\ast K_C)\otimes (p_2^\ast K_C)
\end{equation}
denote the canonical line bundle of the complex surface $S$.

We recall the definition of a projective structure.

\begin{definition}
A holomorphic atlas $\{(U_i\, z_i)\}_{i\in I}$ of $C$ is called {\it projective} if all its transition
functions satisfy
$$z_i\ =\ \frac{a_{ij}z_j+b_{ij}}{c_{ij}z_j+d_{ij}}$$
for all $i,\, j\, \in\, I$ with $U_i\bigcap U_j\,\not=\, \emptyset$, where
$a_{ij}$, $b_{ij}$, $c_{ij}$ and $d_{ij}$ are locally constant functions on
$U_i\bigcap U_j$. Two projective atlases are considered equivalent if their union is also a projective atlas. A
{\it projective structure} on $C$ is an equivalence class of projective atlases.
\end{definition}

The space $\mathcal{P}(C)$ of projective structures on $C$ is an affine space modelled on 
the space $H^0(C,\, 2K_C)$ of global quadratic differentials. This is explained excellently in \cite{Tyu}.

Denote the line bundle $K_S\otimes {\mathcal O}_S(2\Delta)$ by $K_S(2\Delta)$.
Using Poincar\'e adjunction formula $K_S(2\Delta)\big\vert_{\Delta}$ is trivialized.
There is a canonical section
\begin{equation}\label{a1}
s^C\, \in\, H^0(2\Delta,\, K_S(2\Delta)\big\vert_{2\Delta})
\end{equation}
which is uniquely determined by the following two conditions:
\begin{enumerate}
\item $s^C\big\vert_{\Delta}\,=\,1$, and

\item the involution $(x,\, y)\, \longmapsto\, (y,\, x)$ of $2\Delta \, \subset\, C\times C$ sends
$s^C$ to $-s^C$.
\end{enumerate}
See \cite{BR2}, \cite{bcfp} for details on this. Note that for the isomorphism in
\eqref{ei1}, the above involution $(x,\, y)\, \longmapsto\, (y,\, x)$ of $C\times C$ acts as identity
map on $((p_1^\ast K_C)\otimes (p_2^\ast K_C))\big\vert_\Delta$ and acts as multiplication by
$-1$ on $K_S\big\vert_\Delta$.

Assuming that $g\, \geq\, 2$, consider the short exact sequence of sheaves
$$
0\,\longrightarrow\, K_S\big\vert_\Delta
\,\longrightarrow\, K_S(2\Delta)\big\vert_{3\Delta}\,
\longrightarrow\, K_S(2\Delta)\big\vert_{2\Delta}\,\longrightarrow\, 0
$$
and the associated long exact sequence of cohomologies
$$0\,\longrightarrow\, H^0(C,\, 2K_C)\,=\, H^0(C,\, K_S\big\vert_\Delta)
\,\longrightarrow\, H^0(3\Delta,\, K_S(2\Delta)\big\vert_{3\Delta})
$$
$$
\longrightarrow\, H^0(2\Delta,\, K_S(2\Delta)\big\vert_{2\Delta})\,\longrightarrow\, 0$$
(recall that $g\, \geq\, 2$, so $H^1(C,\, 2K_C)\,=\, 0$). There is a canonical isomorphism between
the affine space $\mathcal{P}(C)$ and the affine subspace of $H^0(K_S(2\Delta)\big\vert_{3\Delta})$
consisting of all sections whose restriction to ${2\Delta}$ coincides with
the section $s^C$ in \eqref{a1}; see \cite[Section 3]{BR1}.

The above isomorphism between the affine space $\mathcal{P}(C)$ and the affine subspace of
$H^0(K_S(2\Delta)\big\vert_{3\Delta})$ consisting of all sections whose restriction to ${2\Delta}$
coincides with the section $s^C$ remains valid for $g\,=\, 1$.

There is also a relative version of the above. Let
\begin{equation}\label{en1}
\pi\, :\, {\mathcal C}\, \longrightarrow\, B
\end{equation}
be a smooth holomorphic family of compact Riemann surfaces of genus $g$. The
relative canonical line bundle on ${\mathcal C}$ for the projection $\pi$ will be denoted by ${\mathcal K}$.
We have the reduced relative diagonal divisor
\begin{equation}\label{en3}
{\bf\Delta}_B\,:=\,\{(x,\,x) \ \mid \ x \,\in\, {\mathcal C}\} \,\subset\, {\mathcal C}\times_B{\mathcal C}.
\end{equation}
Let $\widetilde{p},\, \widetilde{q}\, :\, {\mathcal C}\times_B{\mathcal C}\, \longrightarrow\,\mathcal C$
be the natural projections to the first factor and second factor respectively. Consider the family of complex surfaces
\begin{equation}\label{en4}
\Pi\, :\, {\mathcal C}\times_B{\mathcal C}\, \longrightarrow \,B.
\end{equation}
We have the holomorphic line bundle
\begin{equation}\label{en4b}
{\mathbb L}\, :=\,
({\widetilde p}^*{\mathcal K}\otimes {\widetilde q}^*{\mathcal K})\otimes
{\mathcal O}_{{\mathcal C}\times_B{\mathcal C}}(2{\bf\Delta}_B)
\end{equation}
on ${\mathcal C}\times_B{\mathcal C}$. For any $k\, \geq\, 1$, consider the natural inclusion map
$$
{\mathcal O}_{{\mathcal C}\times_B {\mathcal C}}(-k{\bf\Delta}_B)\otimes {\mathbb L}\, \hookrightarrow\,
{\mathcal O}_{{\mathcal C}\times_B {\mathcal C}}\otimes {\mathbb L}\,=\, {\mathbb L}.
$$
The corresponding quotient ${\mathbb L}/({\mathcal O}_{{\mathcal C}\times_B
{\mathcal C}}(-k{\bf\Delta}_B)\otimes {\mathbb L})$ coincides with $I(k)_*I(k)^*{\mathbb L}$, where
$I(k)$ is the inclusion map of the $(k-1)$-th order infinitesimal neighborhood of ${\bf\Delta}_B$
to ${\mathcal C}\times_B {\mathcal C}$.
Using the map $\Pi$ in \eqref{en4}, construct the direct images 
\begin{equation}\label{en5}
{\mathcal V}\, :=\, \Pi_*({\mathbb L}/({\mathcal O}_{{\mathcal C}\times_B
{\mathcal C}}(-3{\bf\Delta}_B)\otimes {\mathbb L}))\,\longrightarrow\,B
\end{equation}
and
$$
{\mathcal V}_2\, :=\, \Pi_*({\mathbb L}/({\mathcal O}_{{\mathcal C}\times_B
{\mathcal C}}(-2{\bf\Delta}_B)\otimes {\mathbb L}))\,\longrightarrow\, B
$$
which are holomorphic vector bundles over $B$. There is a natural surjective homomorphism
\begin{equation}\label{psi}
\Psi\, :\, {\mathcal V}\, \longrightarrow\, {\mathcal V}_2.
\end{equation}
The vector bundle ${\mathcal V}_2$ has a natural holomorphic section given by the
canonical trivialization of $s^C$ in \eqref{a1} for every curve $C$ in the family $\pi$;
the holomorphic section of ${\mathcal V}_2$ constructed this way will be denoted by $s_0$. Now define
\begin{equation}\label{whcv}
\widehat{\mathcal V}\, :=\, \Psi^{-1}(s_0)\, \subset\, \mathcal V\, ,
\end{equation}
where $\Psi$ is the projection in \eqref{psi}. We note that $\widehat{\mathcal V}$ is a holomorphic
affine bundle over $B$ modelled on the vector bundle
\begin{equation}\label{em}
\pi_*{\mathcal K}^{\otimes 2} \ \longrightarrow\ B,
\end{equation}
where $\pi$ is the projection in \eqref{en1} and
$\mathcal K$ is the relative canonical bundle in \eqref{en4b}.
The $C^\infty$ (respectively, holomorphic) sections of the fiber bundle $\widehat{\mathcal V}
\, \longrightarrow\,B$ in \eqref{whcv} are in a natural bijective correspondence with
the $C^\infty$ (respectively, holomorphic) families of projective structures for the family of
curves $\mathcal C$; see \cite[p.~7, Lemma 2.1]{bcfp}.

Let
$$
\beta\, :\, B\, \longrightarrow\, \widehat{\mathcal V}
$$
be a $C^\infty$ section of the projection $\widehat{\mathcal V}\, \longrightarrow\,
B$ (see \eqref{whcv}). Denote the Dolbeault operator for the holomorphic vector bundle 
$\mathcal V$, defined in \eqref{en5}, by $\overline{\partial}_{\mathcal V}$. Since $\beta$ 
is also a section of $\mathcal V$, and $\Psi$ in \eqref{psi} is holomorphic, it follows that
\begin{equation}\label{k2}
\overline{\partial}_{\mathcal V}(\beta)\, \in\, \Omega^{0,1}(B, \, {\rm kernel}(\Psi))
\,=\, \Omega^{0,1}(B, \, \pi_*{\mathcal K}^{\otimes 2});
\end{equation}
see \cite[p.~7, (2.19)]{bcfp}.

There are natural holomorphic restriction maps 
\begin{equation}\label{eqr}
\begin{tikzcd}
\Pi_\ast\mathbb{L} \arrow[r, "\mathbf{r}"] \arrow[rd, "\mathbf{r}_2"'] & \mathcal{V} \arrow[d, "\Psi"] \\
& \mathcal{V}_2 
\end{tikzcd}
\end{equation}
where $\Pi$, $\mathbb L$ and $\Psi$ are constructed in \eqref{en4}, \eqref{en4b} and \eqref{psi}
respectively. So if $f$ is a section of $\Pi_\ast\mathbb{L}$ such that $\mathbf{r}_2(f)\,=\,s_0$, then
$\mathbf{r}(f)$ is a section of $\widehat{\mathcal V}$, and hence
$\mathbf{r}(f)$ gives a family of projective structures. It may be mentioned that not all
sections of $\widehat{\mathcal V}$ can be obtained in this way \cite[p.~18, Theorem 4.2]{bcfp}.

Nonetheless, the two projective structures $\beta^\theta$ and $\beta^\eta$ that we consider in this
paper are actually obtained in the above way from sections of $\Pi_\ast\mathbb{L}$. We now briefly recall
their constructions now.

Let $C$ be a compact Riemann surface of genus $g\, \geq\, 1$ and $x\, \in\, C$ a point.
We have a natural inclusion map
$$j_x\, :\, H^0(C,\, K_C\otimes \mathcal{O}_C(2\cdot x))\, \hookrightarrow\, H^1(C-\{x\},\, \mathbb{C})
\,\cong\, H^1(C,\,\mathbb{C})\,=\, H^{1,0}(C)\oplus H^{0,1}(C)$$
(note that the condition $g\, \geq\, 1$ is used here).
Since we have $\dim H^0(C,\, K_C\otimes \mathcal{O}_C(2\cdot x))\,=\,g+1$ and $\operatorname{Im}\, (j_x)\,\supset
\,H^{1,0}(C)$, it follows that that $j_x^{-1}(H^{0,1}(C))$ is a line in
$H^0(C,\, K_C\otimes \mathcal{O}_C(2\cdot x))$.
If $(U,\, z)$ is any holomorphic coordinate chart on $C$ containing $x$ such that $z(x)\,=\,0$, then
there is a unique $\eta_z\,\in\, H^0(C,\, K_C\otimes \mathcal{O}_C(2\cdot x))$ satisfying the conditions 
$$\eta_z\,\in\, j_x^{-1}(H^{0,1}(C)), \ \ \ \text{ and }\ \ \ \eta_z\,=\,\left(\frac{1}{z^2}+h(z)\right)dz ,$$
over $U$, with $h$ holomorphic. Notice that the residue of $\eta_z$ at $x$ must be zero by the residue
theorem. This construction yields a meromorphic form $\eta_z$ on $C$ for each coordinate $z$. 
It can be shown that there is a unique $\eta_C\,\in\, H^0(S,\, K_{S}(2\Delta))$ such that
\begin{equation}\label{cbd}
\eta_C\big\vert_{C\times \{x\}}\ \,=\ \,\left(p_1^\ast \eta_z\wedge p_2^\ast dz\right)\big\vert_{C\times\{x\}}
\end{equation}
for all charts $(z,\,U)$ such that $0\,\in\, z(U)$,
where, for each chart $z$, the point $x$ is the one with $z(x)\,=\,0$. We also have $\eta_C\big\vert_{\Delta}\,=\,1$ and
$(\eta_C)|_{2\Delta} \, =\, s^C$ (the canonical section in \eqref{a1}). For more details on $\eta$ see
\cite[Section 2]{EPG}, \cite[Chapter 4, Section 6]{Gu2}, \cite[Section 3]{bcfp}.

We now construct another global section of $K_S(2\Delta)$. Let $J(C)$ be the Jacobian of $C$; denote by $H$ the canonical polarization
on $J(C)$. Consider the map
$$\phi\,:\,C\times C\,\lra\, J(C),\qquad (x,\,y)\,\longmapsto\, \mathcal{O}_C(x-y).$$
One can define intrinsically a line bundle $2\Theta$ on $J(C)$ with $c_1(2\Theta)\,=\,2H$ (see for example \cite[p.~13]{vg}). The vector 
space $H^0(J(C),\,2\Theta)$ has a natural conformal class of Hermitian structures. Also, $2\Theta$ is base--point free. Consider the line 
$\mathbb{S}\,\subset\, H^0(J(C),\,2\Theta)$ orthogonal to all sections vanishing at the origin. It can be shown that $$\phi^\ast 
2\Theta\,\cong\, K_{S}\otimes\mathcal{O}_{S}(2\Delta)$$ (see \cite[Proposition 3.1]{BGV}) and that
\begin{equation}\label{cbd2}
\phi^\ast \mathbb{S}\, = \, \sigma_C\otimes {\mathbb{C}} \, \subset\, H^{0}(S,\, K_{S}(2\Delta))
\end{equation}
is the line generated by a unique element $\sigma_C\, \in\,
H^{0}(S,\, K_{S}(2\Delta))$ with $\sigma_C\big\vert_{\Delta}\,=\,1$ (see \cite[Proposition 3.3]{BGV}).
Again $\sigma_C\big\vert_{2\Delta}\,=\,s^C$ (defined in \eqref{a1}). 

Consider now the sections of $\Pi_\ast \mathbb{L}$ defined by
\begin{equation}\label{eta} 
\eta\,:\,B\,\lra \,\Pi_\ast \mathbb{L},\qquad b\,\longmapsto\, \eta_{C_b}
\end{equation} 
\begin{equation}\label{sigma}
\sigma\,:\,B\,\lra\, \Pi_\ast \mathbb{L},\qquad b\,\longmapsto\, \sigma_{C_b}
\end{equation}
(see \eqref{cbd}, \eqref{cbd2}).
Since $(\eta_C)\big\vert_{2\Delta}\,=\,s^C \,=\, 
(\sigma_C)\big\vert_{2\Delta}$, we have $\mathbf{r}_2(\eta)\,=\, s_0\,=\,\mathbf{r}_2(\sigma)$, and 
therefore
\begin{equation}\label{pr-strs}
\beta^\eta \, : = \, \mathbf{r}(\eta),\qquad \beta^\theta \, := \, \mathbf{r}(\sigma)
\end{equation}
define smooth families of projective structures on $B$. As 
$\sigma_C$ and $\eta_C$ only depend on the isomorphism class of $C$, $\beta^\eta$ and $\beta^\theta$ may 
be interpreted as orbifold sections of the orbifold bundle $\mathcal{V}$ on $\M_g$.

\section{A canonical $(1,1)$ form on $\A_g$}\label{sect:descend}

In this section we study some properties of the Theta nullwert map 
\begin{equation}\label{nm}
\Theta_g\ :\ \mathbb{H}_g\ \longrightarrow\ \mathbb{P}^{2^g-1}
\end{equation}
of \cite{vg}, where
${\mathbb H}_g$ is the Siegel space. In particular we study the 
$(1,1)$ form $\Theta_g^\ast \omega_{FS}$, where $\omega_{FS}$ is the Fubini-Study form on 
$\mathbb{P}^{2^g-1}$. We will show that this form descends to $\A_g$, giving a canonical $(1,1)$ form on 
the moduli space of principally polarized abelian varieties. Recall that if $$Z\ =\ [Z_1:\,\cdots :\,Z_{2^g}]$$ 
are homogeneous coordinates on $\mathbb{P}^{2^g-1}$, then $\omega_{FS}$ may be expressed as
\begin{equation}\label{eq:omegafs}
\omega_{FS}\ = \ \frac{\sqrt{-1}}{2}\partial \debar \log |Z|^2.
\end{equation}

We briefly recall the definition of the map $\Theta_g$. 
Let
\begin{equation}\label{eu}
U\,:=\, \frac{\frac{1}{2} \mathbb Z^g}{\mathbb Z^g}
\end{equation}
For $u\,\in\, U$ and $\tau\,\in\,\mathbb{H}_g$ define the second order theta function with characteristic $u$ as
\begin{equation}
\label{eq:secondorder}
\theta_u(z;\,\tau)\, :=\, \sum_{m\in \mathbb{Z}^g} \exp(2\pi\sqrt{-1} (m+u)^t \tau (m+u) +4\pi\sqrt{-1} (m+u)^t z).
\end{equation}
In the right hand side of \eqref{eq:secondorder}, one takes any representative of $u$ in $\mathbb{R}^g$ modulo $\mathbb{Z}^g$. It is easy to see that the right hand side is
actually independent of the representative chosen.
The second order Theta nullwert map $\Theta_g$ is given by
\begin{equation}\label{tau2}
\Theta_g\ :\ \mathbb{H}_g\ \longrightarrow\ \mathbb{P}^{2^g-1}, \ \ \ 
\tau\, \longmapsto\, (\theta_u(0;\,\tau))_{u\in U},
\end{equation}
where $U$ is defined in \eqref{eu}. We re-labeled the elements of the standard basis of $\C^{2^{^g}}$ as $\{e_\alpha\}_{\alpha
\in U}$. The above notation $U$ will be used throughout. 

We introduce some more notation for this section.
For $\alpha,\,\beta\,\in\, \mathbb{R}^g$, define
$$\alpha\cdot \beta\ \, :=\ \,\sum_{i=1}^g \alpha_i\beta_i.$$
The group $\mathbb{Z}/2\mathbb{Z}$ will often be denoted by $\mathbb{Z}_2$. The class of
$\alpha\,\in\, \mathbb{Z}^g$ in $\mathbb{Z}_2^g$ will be denoted by $[\alpha]$.
For $\alpha,\, \beta\, \in\,
\mathbb{Z}_2^g$, define $$\alpha\cdot \beta\ :=\ \sum_{i=1}^g \alpha_i\beta_i
\ \in\ \mathbb{Z}_2 .$$ Let
\begin{equation}\label{eE}
E\,\, :=\,\, \{(\varepsilon,\,\delta)\,\in\, \mathbb{Z}_2^g\times
\mathbb{Z}_2^g\,\, \big\vert\,\,\, \varepsilon\cdot \delta\,=\,0\}
\end{equation}
be the even characteristics. It is well known that $\# E\,=\, 2^{g-1}(2^g+1)$.

We define the theta functions with real characteristics $c^1, \,c^2\,\in\, \mathbb{R}^g$:
\begin{equation}\label{eq:thetachar}
\theta\tcc{c^1}{c^2}(z;\tau)\,\,:=\,\,\sum_{m\in\mathbb{Z}^g}
\exp ({\pi\sqrt{-1} (m+c^1)^t \tau (m+c^1)+
2\pi\sqrt{-1} (m+c^1)^t(z+c^2)}).
\end{equation}
For $m^1,\, m^2\,\in \,\mathbb{Z}^g$, it is easy to see from \eqref{eq:thetachar} that 
\begin{equation}\label{eq:subtle}
\theta\tcc{\frac{m^1+2k^1}{2}}{\frac{m^2+2k^2}{2}}(z;\tau)\,=\,
(-1)^{m^1\cdot k^2}\theta\tcc{m^1/2}{m^2/2}(z;\tau)\ \ \ \ \ \forall\,\ \, k^1,\,k^2\,\in\, \mathbb{Z}^g.
\end{equation}
Therefore, (recall that $[m]$ is the class of $m$ in $\mathbb{Z}_2^g$) the formula
$$\theta^2_{[m^1],[m^2]}(z;\tau) \, :=\, \theta^2\tcc{m^1/2}{m^2/2}(z;\tau)\ \ \ \forall\,\ \, m^1,\,m^2\,\in\,\mathbb{Z}^g$$
defines, consistently, maps $\theta^2_{\varepsilon,\delta}$ for all pairs $(\varepsilon,\,\delta)\,\in \,\mathbb{Z}_2^g$.
Through these functions define the auxiliary map 
\begin{equation}\label{et2}
\theta_g^2\,\,\,:\,\,\,\mathbb{H}_g\,\longrightarrow\, \mathbb{P}^{2^{g-1}(2^g+1)-1},\ \ \
\tau\,\longmapsto\, \left(\theta^2_{\varepsilon,\delta}(0;\tau)\right)_{\varepsilon,\delta\in E};
\end{equation}
here the elements of the standard basis of $\mathbb{C}^{2^{g-1}(2^g+1)}$ are relabeled as
$\{u_{\varepsilon,\delta}\}_{(\varepsilon,\delta)\in E}$ (see \eqref{eE} for $E$).
For details on this map see the map $\theta^2$ in \cite{salvati-modular}.

We have $\mathbb{C}^{2^{g-1}(2^g+1)}\,\cong \,\operatorname{Sym}^2
\mathbb{C}^{2^g}$. It is well-known that $\theta_g^2\,=\, v\, \circ \, \Theta_g$ for the Veronese embedding 
$$v\ :\ \mathbb{P}(\C^{2^g}) \ \longrightarrow\ \mathbb{P}(\operatorname{Sym}^2 \C^{2^g}),$$ up to
an automorphism of $\mathbb{P}(\operatorname{Sym}^2 \C^{2^g})$. We will need 
a ``metric'' version of this result. For a complex vector space $V$ with an
inner product, denote by $\omega_{\mathbb{P}(V)}^{FS}$ the 
Fubini--Study metric on $\mathbb{P}(V)$. Consider the particular Veronese embedding 
$$v_2\ :\ \mathbb{P}(\C^n)\ \longrightarrow\ \mathbb{P}(\operatorname{Sym}^2 \C^n)$$ defined by
\begin{equation}\label{ev}
\sum_{i=1}^n \lambda_i e_i\,\ \longmapsto\,\ \sum_{i=1}^n \lambda_i^2 e_i\odot e_i+\sum_{1\leq i<j\leq n}
\sqrt{2} \lambda_i\lambda_j e_i\odot e_j.
\end{equation}

\begin{lemma}\label{lefs}
For the above map $v_2\,:\,\mathbb{P} (\C^n) \,\longrightarrow\, \mathbb{P} (\operatorname{Sym}^2 \C^n)$,
$$v_2^\ast \,\omega_{\mathbb{P}(\operatorname{Sym}^2\C^n)}^{FS}\ =\ 2\,\omega_{\mathbb{P}(\C^n)}^{FS}.$$
\end{lemma}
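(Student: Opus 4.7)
The plan is to exploit the defining formula \eqref{eq:omegafs}, which gives $\omega^{FS}_{\mathbb{P}(V)}=\tfrac{\sqrt{-1}}{2}\partial\debar\log|Z|^2$ in terms of homogeneous coordinates $Z$ on $V$. Since $\partial\debar$ is linear, the claim will follow immediately once we show that
$$\log\bigl|v_2(Z)\bigr|^2\ =\ 2\,\log|Z|^2,$$
i.e.\ that $|v_2(Z)|^2=|Z|^4$ as functions on $\C^n\setminus\{0\}$.

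So the essential computation is to check the norm identity $|v_2(Z)|^2=|Z|^4$. Writing $Z=\sum_i\lambda_i e_i$, and taking on $\operatorname{Sym}^2\C^n$ the inner product for which the basis $\{e_i\odot e_j\}_{i\leq j}$ is orthonormal, one has by \eqref{ev}
$$\bigl|v_2(Z)\bigr|^2\ =\ \sum_{i=1}^n |\lambda_i|^4+\sum_{1\leq i<j\leq n}2|\lambda_i|^2|\lambda_j|^2\ =\ \Bigl(\sum_{i=1}^n|\lambda_i|^2\Bigr)^{\!2}\ =\ |Z|^4.$$
Notice that the precise coefficient $\sqrt{2}$ appearing in the off-diagonal components of $v_2$ is exactly what is required to produce the factor $2$ on the mixed terms and complete the binomial identity; a differently normalized Veronese map would not satisfy such a clean relation.

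Having this in hand, apply $\tfrac{\sqrt{-1}}{2}\partial\debar$ to both sides of $\log|v_2(Z)|^2=2\log|Z|^2$. Since $v_2^*\omega^{FS}_{\mathbb{P}(\operatorname{Sym}^2\C^n)}$ is computed via \eqref{eq:omegafs} using the homogeneous coordinates $v_2(Z)$ on $\operatorname{Sym}^2\C^n$, we obtain
$$v_2^*\,\omega^{FS}_{\mathbb{P}(\operatorname{Sym}^2\C^n)}\ =\ \tfrac{\sqrt{-1}}{2}\partial\debar\log\bigl|v_2(Z)\bigr|^2\ =\ 2\cdot\tfrac{\sqrt{-1}}{2}\partial\debar\log|Z|^2\ =\ 2\,\omega^{FS}_{\mathbb{P}(\C^n)},$$
which is the desired equality.

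The only potentially subtle point is the choice of Hermitian inner product on $\operatorname{Sym}^2\C^n$: one must take the product for which $\{e_i\odot e_j\}_{i\leq j}$ is orthonormal (equivalently, one regards $\operatorname{Sym}^2\C^n$ as a subspace of $\C^n\otimes\C^n$ with a rescaled metric), as this is the convention implicit in the Fubini--Study form on $\mathbb{P}(\operatorname{Sym}^2\C^n)$ used throughout the section. With that convention fixed, everything reduces to the elementary identity above, and there is no further obstacle.
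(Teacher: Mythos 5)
Your proof is correct and follows essentially the same route as the paper: both arguments reduce the claim to the norm identity $|v_2(Z)|^2=|Z|^4$ (so that $\log|v_2(Z)|^2=2\log|Z|^2$) and then apply $\frac{\sqrt{-1}}{2}\partial\debar$; the paper simply carries out the computation explicitly for $n=2$ and remarks that the general case is identical, whereas you write out the general binomial identity directly.
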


\begin{proof}
This will be proved assuming that $n\,=\,2$; proof of the general case is the same.
Identifying $\mathbb{P}(\operatorname{Sym}^2 \C^2)$ with $\mathbb{P}^2$, choose homogeneous coordinates
$u,\,v$ on $\mathbb{P}^1$ and $x,\,y,\,z$ on $\mathbb{P}^2$ such that
\begin{enumerate}
\item $v_2\,:\,\mathbb{P}^1 \,\longrightarrow\, \mathbb{P}^2$ is given by
$[u:\,v]\,\longmapsto\, [u^2:\,\sqrt{2}uv:\,v^2]\,=\,[x:\,y:\,z]$,

\item $\omega_{\mathbb{P}^2}^{FS}\,=\,\frac{\sqrt{-1}}{2} \partial \overline{\partial}\log
(|x|^2+|y|^2+|z|^2)$, and

\item $\omega_{\mathbb{P}^1}^{FS}\,=\,
\frac{\sqrt{-1}}{2}\partial \debar \log(|u|^2+|v|^2)$.
\end{enumerate}
Therefore, $$ v_2^\ast \,\omega_{\mathbb{P}^2}\,=\,\frac{\sqrt{-1}}{2} \partial \overline{\partial}
\log(|u^2|^2+|\sqrt{2}uv|^2+|v^2|^2)\,=\, \sqrt{-1} \partial \overline{\partial}
\log(|u|^2+|v|^2)\,=\,2\,\omega_{\mathbb{P}^1}.$$
This completes the proof.
\end{proof}

A notation for the next lemma: let $(\mathbb{Z}_2^g)^{(2)}$ denote 
the unordered pairs of elements of $\mathbb{Z}_2^g = (\mathbb{Z}/2\mathbb{Z})^g$. So
$(\mathbb{Z}_2^g)^{(2)}$ is the quotient of $\mathbb{Z}_2^g\times\mathbb{Z}_2^g $ by the equivalence relation
$(\alpha,\,\alpha')\,\sim\, (\alpha',\,\alpha)$ for $\alpha,\, \alpha' \,\in\, \mathbb{Z}_2^g$.

\begin{lemma}\label{lfs}
There exists a unitary automorphism $p$ of $\mathbb{P} (\operatorname{Sym}^2 \C^{2^g})
\ =\ \mathbb{P} (\C^{2^{g-1}(2^g+1)})$ such that 
\begin{equation}
p\circ \theta_g^2\ =\ v_2\circ \Theta_g,
\end{equation}
where the maps $\Theta_g$, $\theta^2_g$ and $v_2$ are constructed in \eqref{tau2},
\eqref{et2} and \eqref{ev} respectively.
\end{lemma}

\begin{proof}
Setting $\beta\,=\,z\,=\,x\,=\,0$ in \cite[Eq.~3]{gradients},
\begin{equation}\label{p1}
\theta_{\frac{\alpha}{2}}(0;\tau)\theta_{\frac{\alpha+\varepsilon}{2}}(0;\tau)
\,\,=\,\, \frac{1}{2^g}\sum_{\sigma
\in \mathbb{Z}_2^g} (-1)^{\alpha\cdot \sigma}\theta^2_{\varepsilon,\sigma}(0;\tau)\ \ \ 
\forall\ \, \, \alpha,\, \varepsilon\,\in\, \mathbb{Z}_2^g
\end{equation}
(in our notation); this is a special case of the Riemann Addition Formula \cite[p.~139]{Igusa}. 
Substituting $\alpha'\,=\,\alpha+\varepsilon$ in \eqref{p1} we get the following:
\begin{equation}\label{j1}
\theta_{\frac{\alpha}{2}}(0;\tau)\theta_{\frac{\alpha'}{2}}(0;\tau)\ =\
\frac{1}{2^g}\sum_{\sigma \in \mathbb{Z}_2^g} (-1)^{\alpha\cdot \sigma}\theta^2_{\alpha+\alpha',\sigma}(0;\tau)\ \ \ \forall\ \,\, 
\alpha,\,\alpha'\,\in\,\mathbb{Z}_2^g.
\end{equation}
For integers $m^1,\, m^2$ with $m^1\cdot m^2\,=\,1\,\mod \,2$, we have $$\theta\tcc{m^1/
2}{m^2/2}(0;\tau)\ \,=\ \,0$$ (see \cite[Chapter 1, Theorem 4]{Gu3}). Consequently, $\theta^2_{\alpha+\alpha',
\sigma}(0;\tau)\,=\,0$ whenever $(\alpha+\alpha')\cdot \sigma
\,\neq\, 0$ and hence
$$\frac{1}{2^g}\sum_{\sigma\in\mathbb{Z}_2^g} (-1)^{\alpha\cdot\sigma}\theta^2_{\alpha+\alpha',\sigma}(0;\tau)\ =\ 
\frac{1}{2^g}\sum_{\sigma\in\mathbb{Z}_2^g\,\vert\, (\alpha+\alpha',\sigma)\in E} (-1)^{\alpha\cdot\sigma}\theta^2_{\alpha+\alpha',\sigma}(0;\tau),$$
where $E$ is defined in \eqref{eE}. Therefore we can write
\begin{equation}\label{eq:emme1}
\sqrt{2}\theta_{\frac{\alpha}{2}}(0;\tau)\theta_{\frac{\alpha'}{2}}(0;\tau)\,\,=\,\,
\sum_{(\varepsilon,\sigma)\in E} M_{\alpha,\alpha';\varepsilon,\sigma}\theta^2_{\varepsilon,\sigma}(0;\tau)\ \ \ (\alpha,\, \alpha'\,\in\, \mathbb{Z}_2^g,\
\alpha\, \not=\, \alpha')
\end{equation}
\begin{equation}\label{eq:emme2}
\theta_{\frac{\alpha}{2}}(0;\tau)^2\,\,=\,\,\sum_{(\varepsilon,\sigma)\in E} M_{\alpha,\alpha;\varepsilon,\sigma}\theta^2_{\varepsilon,\sigma}(0;\tau)\
\ \ (\alpha\,\in\, \mathbb{Z}_2^g),
\end{equation}
where
\[
M_{\alpha,\alpha';\varepsilon,\sigma}\,\,\,=\,\,\,
\begin{cases}
\frac{1}{2^g}(-1)^{\alpha\cdot \sigma} & \,\text{if }\,\alpha+\alpha'\,=\,\varepsilon\,=\,0\\
\frac{\sqrt{2}}{2^g}(-1)^{\alpha\cdot \sigma} & \,\text{if }\,\alpha+\alpha'\,=\,\varepsilon\,\neq\, 0\\
0 &\, \text{otherwise.}
\end{cases}
\]
Here $M$ should be understood as a $\frac{4^g+2^g}{2}\times \frac{4^g+2^g}{2}$ matrix, with rows
indexed by (unordered) pairs $(\alpha,\,\alpha')\,\in\, (\mathbb{Z}_2^g)^{(2)}$ and columns indexed by
even characteristics $(\varepsilon,\,\delta)$. Clearly $M$ induces the
automorphism $p$ in the statement via the linear map
$$u_{\varepsilon,\delta}\,\,\longmapsto\,\, \sum_{(\alpha,\,\alpha')\,\in\, 
(\mathbb{Z}_2^g)^{(2)}} M_{\alpha,\alpha';\varepsilon,\delta} e_\alpha\odot e_{\alpha'}$$ 
(this is the content of equations \eqref{eq:emme1} and \eqref{eq:emme2}). Since $M$ is a real matrix, it remains to show that $M$ is 
a scalar multiple of an orthogonal matrix. 

Computing
$$(M^t\cdot M)_{\varepsilon,\sigma;\varepsilon',\sigma'}\ =\ \sum_{\alpha,\alpha'\in (\mathbb{Z}_2^g)^{(2)}}
M_{\alpha,\alpha';\varepsilon,\sigma}M_{\alpha,\alpha';\varepsilon',\sigma'}$$
we see that it is zero whenever $\varepsilon\,\neq\, \varepsilon'$, because
$M_{\alpha,\alpha';\varepsilon,\sigma}\,\neq\, 0$ if and only if $\alpha'\,=\,\alpha+\varepsilon$.
 
For the same reason, when $\varepsilon\,=\,\varepsilon'$, we have
$$(M^t\cdot M)_{\varepsilon,\sigma;\varepsilon,\sigma'}\ =\ \sum_{\alpha,\alpha'\in (\mathbb{Z}_2^g)^{(2)}\vert\alpha+\alpha'
\ =\ \varepsilon}M_{\alpha,\alpha';\varepsilon,\sigma}M_{\alpha,\alpha';\varepsilon,\sigma'}.$$
If $\varepsilon\,=\,\varepsilon'\,=\,0$, then
$$(M^t\cdot M)_{0,\sigma;0,\sigma'}\,\,=\,\,\sum_{(\alpha,\alpha')\in (\Z_2^g)^{(2)}\vert\alpha=\alpha'}
M_{\alpha,\alpha';0.\sigma}M_{\alpha,\alpha';0.\sigma'}
$$
$$
=\,\,\sum_{\alpha\in \Z_2^g} M_{\alpha,
\alpha;0,\sigma}M_{\alpha,\alpha,0,\sigma'}
\,\, =\, \,\frac{1}{4^g}\sum_{\alpha\in \mathbb{Z}_2^g}(-1)^{\alpha\cdot (\sigma+\sigma')},$$
and this is zero when $\sigma+\sigma'\,\neq\, 0$ (which holds if and only
if $\sigma\,\neq\,\sigma'$), and it is $\frac{1}{2^g}$ otherwise.

If $\varepsilon\,=\,\varepsilon'\,\neq\, 0$, the pair $(\alf, \,\alf +\epsilon)$ is the same as the pair $(\alf - \epsilon ,\,\alfa)$ in $(\mathbb{Z}_2^g)^{(2)}$. Hence we have
$$(M^t\cdot M)_{\varepsilon,\sigma;\varepsilon,\sigma'}\,\,=\,\,\sum_{(\alpha,\alpha')\in (\Z_2^g)^{(2)}\vert\alpha=\alpha'+\varepsilon}
M_{\alpha,\alpha';\varepsilon.\sigma}M_{\alpha,\alpha';\varepsilon.\sigma'}
$$
$$
=\,\,\frac{1}{2}\sum_{\alpha\in \Z_2^g} M_{\alpha,\alpha+\varepsilon;\varepsilon,\sigma}
M_{\alpha,\alpha+\varepsilon,\varepsilon,\sigma'}
\,\,=\,\, \frac{1}{4^g}\sum_{\alpha\in \mathbb{Z}_2^g}(-1)^{\alpha\cdot (\sigma+\sigma')}$$
and, again, this is $\frac{1}{2^g}\delta_{\sigma \sigma'}$. Combining these it follows that
$$(M^t\cdot M)_{\varepsilon,\sigma;\varepsilon',\sigma'}\ =\ \begin{cases}
0 & \text{if } (\varepsilon,\, \sigma)\,\neq\, (\varepsilon',\,\sigma')\\
\frac{1}{2^g} & \text{if } (\varepsilon,\, \sigma)\,=\, (\varepsilon',\,\sigma'), \end{cases}$$
thus $M$ is constant scalar multiple of an orthogonal matrix.
\end{proof}

\begin{proposition} \label{prop:descends}
The form $\Theta_g^\ast \omega_{FS}$ descends to $\A_g$, where $\omega_{FS}$ is the form in \eqref{eq:omegafs} and $\Theta_g$ is
the map in \eqref{tau2}.
\end{proposition}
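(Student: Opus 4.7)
The plan is to reduce, via Lemmas \ref{lefs} and \ref{lfs}, to establishing the $\Sp(2g, \mathbb{Z})$-invariance of $(\theta_g^2)^\ast \omega_{FS}$, and then to derive that invariance from the Theta Transformation Formula. Since the automorphism $p$ in Lemma \ref{lfs} is unitary, it is an isometry of the Fubini--Study metric and $p^\ast \omega_{FS}\,=\,\omega_{FS}$. Combined with Lemma \ref{lefs} this gives
$$
(\theta_g^2)^\ast \omega_{FS} \,=\, (p \circ \theta_g^2)^\ast \omega_{FS} \,=\, (v_2 \circ \Theta_g)^\ast \omega_{FS} \,=\, \Theta_g^\ast(v_2^\ast \omega_{FS}) \,=\, 2 \, \Theta_g^\ast \omega_{FS},
$$
so it suffices to prove that $(\theta_g^2)^\ast \omega_{FS}$ descends to $\A_g\,=\,\mathbb{H}_g / \Sp(2g, \mathbb{Z})$, i.e., that it is $\Sp(2g, \mathbb{Z})$-invariant.

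Next I would invoke the classical Theta Transformation Formula for theta functions with characteristics (as in Igusa, \emph{Theta Functions}, Chapter V). For any $\gamma\,=\,\bigl(\begin{smallmatrix} A&B\\ C&D\end{smallmatrix}\bigr)\,\in\, \Sp(2g,\mathbb{Z})$, the formula provides a permutation $(\varepsilon,\,\delta)\,\mapsto\, \gamma\cdot(\varepsilon,\,\delta)$ of the set $E$ of even characteristics (the parity $\varepsilon\cdot\delta$ is preserved by the symplectic action), together with a transformation law of the form
$$
\theta\tcc{(\gamma\cdot (\varepsilon,\delta))_1/2}{(\gamma\cdot (\varepsilon,\delta))_2/2}(0;\gamma\tau)\,=\,\kappa(\gamma,\tau)\, \zeta(\gamma;\varepsilon,\delta)\, \theta\tcc{\varepsilon/2}{\delta/2}(0;\tau),
$$
where $\kappa(\gamma,\tau)$ is a nonvanishing holomorphic factor common to all characteristics (essentially a root of unity times $\det(C\tau+D)^{1/2}$) and $\zeta(\gamma;\varepsilon,\delta)$ is an eighth root of unity. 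Squaring removes the sign ambiguity in $\det(C\tau+D)^{1/2}$ and yields
$$
\theta^2_{\gamma\cdot (\varepsilon,\delta)}(0;\gamma\tau)\,=\,\kappa(\gamma,\tau)^2\, \zeta(\gamma;\varepsilon,\delta)^2\, \theta^2_{\varepsilon,\delta}(0;\tau).
$$

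Interpreted in the basis $\{u_{\varepsilon,\delta}\}_{(\varepsilon,\delta)\in E}$ of $\mathbb{C}^{2^{g-1}(2^g+1)}$, this reads $\theta_g^2(\gamma\tau)\,=\,\kappa(\gamma,\tau)^2\, U(\gamma)\, \theta_g^2(\tau)$, where $U(\gamma)$ is the monomial operator that permutes $\{u_{\varepsilon,\delta}\}$ according to $\gamma$ and multiplies each coordinate by the unit-modulus number $\zeta(\gamma;\varepsilon,\delta)^2$. Such an operator is unitary, so its projectivization is an isometry of the Fubini--Study metric on $\mathbb{P}^{2^{g-1}(2^g+1)-1}$; the scalar factor $\kappa(\gamma,\tau)^2$ is killed by projectivization. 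Pulling back gives $\gamma^\ast (\theta_g^2)^\ast \omega_{FS}\,=\,(\theta_g^2)^\ast \omega_{FS}$, proving the required $\Sp(2g,\mathbb{Z})$-invariance.

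The main technical obstacle is the careful bookkeeping inherent in the Theta Transformation Formula: one has to verify that the non-scalar portion of the transformation is unitary, namely that the dependence on the characteristic $(\varepsilon,\delta)$ enters only through the permutation and a root-of-unity phase, while the entire $\tau$-dependence is absorbed into a scalar common to all components. Once this is checked, the descent is immediate from unitary invariance of $\omega_{FS}$.
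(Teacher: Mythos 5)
Your proposal is correct and follows essentially the same route as the paper: reduce via the unitary identification of $\theta_g^2$ with $v_2\circ\Theta_g$ to the invariance of $(\theta_g^2)^\ast\omega_{FS}$, then apply the Theta Transformation Formula and observe that the characteristic-dependence enters only through a permutation of $E$ and unimodular phases, while the common scalar factor is killed by projectivization (equivalently, by $\partial\overline{\partial}\log$). The one step you defer to the classical literature --- that the symplectic action on half-integer characteristics is well defined modulo $\mathbb{Z}^g$ and restricts to a bijection of the even characteristics --- is exactly what the paper isolates and proves as its separate Claim, so you should either cite it precisely (e.g.\ Igusa, Ch.~V) or supply that verification.
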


\begin{proof}
With an abuse of notation, we will denote the Fubini-Study metric on every $\mathbb{P}^n$ by
the same symbol $\omega_{FS}$.
Both $v_2$ and $p$ in Lemma \ref{lfs} preserve $\omega_{FS}$ up to a nonzero scalar multiple.
Therefore,
$$\Theta_g^\ast \omega_{FS}\,=\,\Theta_g^\ast v_2^\ast\omega_{FS}\,=\,
(\theta^2)^\ast p^\ast \omega_{FS}\,=\,(\theta^2)^\ast \omega_{FS}$$ up to a
nonzero scalar multiple. Consequently, in order to prove that $\Theta_g^\ast \omega_{FS}$ descends to
$\A_g$ it suffices to show that $(\theta^2)^\ast \omega_{FS}$ does so. The reason behind this approach
is that $\theta^2$ is more symmetric with respect to the Theta Transformation Formula given below
in \eqref{ttf}.

Take any matrix $$M\,\,=\,\,\begin{pmatrix}a & b\\c & d\end{pmatrix}\,\,\in\,\, {\rm Sp}(2g,\mathbb{Z}).$$ Recall
the Theta transformation formula \cite[Proposition 8.6.1]{lange-birkenhake}: for any $c^1,\,c^2\,\in\,
\mathbb{R}^g$, there exist $k(M,\,\varepsilon/2,\, \delta/2)\,\in\, \mathbb R$ and $\kappa(M)\,\in\,
S^1\subset \mathbb{C}$ such that
\begin{multline}
\label{ttf}
\theta\tcc{M(c^1,c^2)^1}{M(c^1,c^2)^2}(
^t (c\tau + d )^{-1} z; M(\tau))=\\
= \det(c \tau + d)^{1/2}\kappa(M)
\exp({\pi\sqrt{-1} k(M,c^1,c^2) + \pi\sqrt{-1} z^t
(c\tau + d )^{-1} cz })\theta\tcc{c^1}{c^2}(z;\tau),
\end{multline}
where $M(\varepsilon/2,\delta/2)^1$ and $M(\varepsilon/2,\delta/2)^2$ are as in \cite[Lemma 8.4.1(b)]{lange-birkenhake}.

Taking the absolute value of the square of \eqref{ttf} at $z\,=\,0$, 
$$\left|\theta^2\tcc{M(c^1,c^2)^1}{M(c^1,c^2)^2}(0;M(\tau))\right|\ =\ |\det (c\tau+d)|\cdot \left|\theta^2\tcc{c^1}{c^2}(0;\tau)\right|;$$
substituting $c^1\,=\,m^1/2$, $c^2\,=\,m^2/2$, where $m^1,\, m^2\, \in\, {\mathbb Z}^g$, this gives the following:
\begin{equation}\label{eq:chchange3}
\left|\theta^2\tcc{M(m^1/2,m^2/2)^1}{M(m^1/2,m^2/2)^2}(0;M(\tau))\right|\,=\,|\det (c\tau+d)|\cdot |\theta^2_{[m_1],[m_2]}(0;\tau)|\ \ \ \forall\ \,\, m^1,\,m^2
\,\in\,\mathbb{Z}^g.
\end{equation}

We now make the following claim:

\textbf{Claim:} Let $m^1,\,m^2\,\in\,{\mathbb Z}^g$. Then
$$2 M(m^1/2,m^2/2)^1\, \in\, {\mathbb Z}^{2g}\ \ \, \text{and }\ \ \, 2 M(m^1/2,m^2/2)^2\, \in\, {\mathbb Z}^{2g}.$$
Moreover, the map
\begin{equation}\label{eq:chchange}
T\,\,:\,\,E\,\lra\, E,\, \ \ \ ([m^1],\, [m^2])\,\,\longmapsto\,\, ([2 M(m^1/2,\,m^2/2)^1],\,[2 M(m^1/2,\,m^2/2)^2]),
\end{equation}
is a bijection.

The proof of the above Claim is given after the proof of the proposition.

In view of the above Claim, \eqref{eq:chchange3} can now be written as
\begin{equation}\label{eq:chchange2}
|\theta^2_{T([m^1],[m^2])}(0;M(\tau))|\,=\,|\det (c\tau+d)|\cdot |\theta^2_{[m_1],[m_2]}(0;\tau)|\ \ \ \forall\,\,\ m^1,\,m^2\,\in\,\mathbb{Z}^g.
\end{equation}

Let
$$W(\tau):\,\, =\,\, \sum_{(\varepsilon,\delta)\in E} \left|\theta^2_{\varepsilon,\delta}(0;\tau)\right|^2;$$
then $(\theta^2)^\ast \omega_{FS}\,=\,\frac{\sqrt{-1}}{2}\partial\debar \log W$. The proof of the proposition
will be completed by showing that the form $\partial\debar \log W$ descends to $\A_g$.

Using the Claim and equation \eqref{eq:chchange2},
\begin{gather*}
W(M(\tau))\,=\,\sum_{(\varepsilon,\delta)\in E} \left|\theta^2_{(\varepsilon,\delta)}(0;M(\tau))\right|^2\,=\,
\sum_{(\varepsilon,\delta)\in E} \left|\theta^2_{T(\varepsilon,\delta)}(0;M(\tau))\right|^2\\
 =\ |\det(c\tau+d)|^2\cdot \sum_{(\varepsilon,\delta)\in E} \left|\theta^2_{(\varepsilon,\delta)}(0;\tau)\right|^2\,=\,|\det(c\tau+d)|^2\cdot W(\tau).
\end{gather*}
But then 
$$\partial\debar \log W(M(\tau))\,=\,\partial\debar \log\det(c\tau+d)+\partial\debar\log\overline{\det(c \tau+d)}+\partial\debar \log W(\tau)
\,=\,\partial\debar \log W(\tau).$$
Since $M\,\in\, \operatorname{Sp}(2g,\mathbb{Z})$ was arbitrary, this proves that $(\theta^2)^\ast\omega_{FS}$
descends to $$\mathbb{H}_g\backslash \operatorname{Sp}(2g,\mathbb{Z})
\,\ =\, \A_g .$$ As noted before, this completes the proof assuming the Claim.
\end{proof}

\begin{proof}[{Proof of the Claim}]
For any $c^1$ and $c^2$ in $\mathbb{R}^g$ it is well-known that $\theta\tcc{c^1}{c^2}(z;\tau)$ is even (respectively, odd) in $z$ if and only if $c^1,\,c^2\,\in\, \frac{1}{2}{\mathbb Z}^g$
and $(2 c^1)\cdot (2c^2)\,=\,0\mod\, 2$ (respectively, $1\mod \,2$); see
\cite[Chapter 1, Th. 4]{Gu3}.

Take $m^1,\,m^2\,\in\, \mathbb{Z}^g$ with $m^1\cdot m^2\ =\ 0\mod\, 2$. Then $\theta\tcc{m^1/2}{m^2/2}(z;\tau)$ is even in $z$, and therefore
$\theta\tcc{M(m^1/2,m^2/2)^1}{M(m^1/2,m^2/2)^2}(v;M(\tau))$ is even in $v$ by \eqref{ttf}. This implies that $$2M(m^1/2,m^2/2)^1,\,\, 2M(m^1/2,m^2/2)^1
\,\,\in\,\, {\mathbb Z}^g,$$
and $(2M(m^1/2,m^2/2)^1)\cdot (2M(m^1/2,m^2/2)^2)\,=\,0\, \mod\, 2$. It now follows that $$([2M(m^1/2,m^2/2)^1],\, [2M(m^1/2,m^2/2)^2])\,\in\, E.$$

One can show that
\begin{equation}\label{eq:injectivityandgooddef}
\begin{cases}
[2M(m^1/2,m^2/2)^1]\,=\,[2M(n^1/2,n^2/2)^1]\\
[2M(m^1/2,m^2/2)^2]\,=\,[2M(n^1/2,n^2/2)^2]
\end{cases}\ \iff\ \begin{cases}
 [m^1]\,=\,[n^1]\\
[m^2]\,=\,[n^2].
\end{cases}
\end{equation}
{}From \eqref{eq:injectivityandgooddef} it follows that the map $T$ in \eqref{eq:chchange} is well defined and injective.

We will prove the ``$\implies$" implication in \eqref{eq:injectivityandgooddef}; the proof of the converse is similar.
By \eqref{eq:subtle}, if $[2M(m^1/2,m^2/2)^1]\,=\,[2M(n^1/2,n^2/2)^1]$ and $[2M(m^1/2,m^2/2)^2]\,=\,[2M(n^1/2,n^2/2)^2]$, then we have
$$\theta\tcc{M(m^1/2,m^2/2)^1}{M(m^1/2,m^2/2)^2}(v, M(\tau))\,\,=\,\,\pm \theta\tcc{M(n^1/2,n^2/2)^1}{M(n^1/2,n^2/2)^2}(v, M(\tau)),$$
and then by \eqref{ttf},
\begin{gather*}
\det(c\tau+d)^{1/2}\kappa(M)\exp(\pi \sqrt{-1}k(M,m^1/2,m^2/2)+\pi \sqrt{-1} z^t (c\tau+d)^{-1} cz)\theta\tcc{m^1/2}{m^2/2}(z;\tau)\,=\\
=\,\pm\det(c\tau+d)^{1/2}\kappa(M)\exp(\pi \sqrt{-1}k(M,n^1/2,n^2/2)+\pi \sqrt{-1} z^t (c\tau+d)^{-1} cz)\theta\tcc{n^1/2}{n^2/2}(z;\tau).
\end{gather*}
{}From this it follows that $$\theta\tcc{m^1/2}{m^2/2}(z;\tau)\ =\ \lambda\theta\tcc{n^1/2}{n^2/2}(z;\tau)$$
for some nonzero $\lambda\,\in\, \mathbb{C}$. By \cite[p. 173, Lemma 1]{Igusa}, this implies that $[m^1]\,=\,[n^1]$ and $[m^2]\,=\,[n^2]$.
\end{proof}

\section{The Theta projective structure}\label{sec:theta-proj-struct}

In this section we will compute $\debar\beta^\theta$ explicitly (see \eqref{pr-strs} and \eqref{k2}). One can work
with coordinates using the following standard procedure (see the example of \cite{Ma}): Let $$\pi\ :\ \mathcal{C}\
\longrightarrow\ B$$ be a holomorphic family of compact Riemann surfaces of genus $g$, as in \eqref{en1},
with $B$ being simply connected. Then $R^1\pi_\ast \mathbb{Z}$ is
a constant sheaf. In other words, there are constant sections $\{a_i,\, b_i\}_{i=1}^g$
of $R^1\pi_\ast \mathbb{Z}$ such that
$\{a_i(b),\, b_i(b)\}_{i=1}^g$ is a basis of $H_1(C_{b},\, {\mathbb Z})$ for any $b\, \in\, B$
satisfying the following conditions:
$$a_i(b)\cap b_j(b)\ =\ \delta_{ij},\ \ \ a_i(b)\cap a_j(b)\ =\ 0,\ \ \ b_i(b)\cap b_j(b)\ =\ 0\ \ \ \forall\ \, i,\,j.$$
Such a basis is also called a marking for $C_b$.
We also have unique holomorphic sections $\{\omega_i\}_{i=1}^g$ of $\pi_\ast \mathcal{K}$ (see \eqref{em}) for which
\begin{equation}\label{eo}
\int_{a_i(b)}\omega_j(b)\ =\ \delta_{ij},\ \ \, b\, \in\, B,
\end{equation}
and in this way one constructs a holomorphic period map
\begin{equation}\label{tau}
j\,\,:\,\, B\, \,\longrightarrow\,\, \mathbb{H}_g,\ \ \ j(b)_{ij}\, :=\, \int_{b_i(b)}\omega_j(b)
\end{equation}
for all $b\, \in\, B$.

Consider the sections $\sigma$ and $\eta$ of $\Pi_\ast \mathbb{L}$ defined in \eqref{eta}, \eqref{sigma}, where $\Pi$
and $\mathbb{L}$ are constructed in \eqref{en4} and \eqref{en4b} respectively.
Recall from \cite[Propositions 6.3 and 6.7]{BGV} that with respect to this marking, the
sections $\sigma$ and $\eta$ of $\Pi_\ast \mathbb{L}$ can be expressed as 
\begin{equation}
\label{eq:sigma}
\sigma(b)\,=\, \Omega_b+\frac{1}{2\sum_{u\in U} |\theta_u(0;j(b))|^2 }\sum_{i,j=1}^g\sum_{u\in U} \overline{\theta_u(0;j(b))}\frac{\partial^2\theta_u}{\partial z_i\partial z_j}(0;j(b))p_1^\ast \omega_i\wedge p_2^\ast \omega_j,
\end{equation}
\begin{equation}
\label{eq:eta}
\eta(b)\ =\ \Omega_b-\pi \sum_{i,j=1}^g (\operatorname{Im} j(b))^{ij} p_1^\ast \omega_i
\wedge p_2^\ast \omega_j ,
\end{equation}
where $\Omega_b\in H^0(S_b,\, K_{S_b}(2\Delta_b))$ is the Canonical Bidifferential in \cite{Gu2}; see
also \cite[p.~20]{Fay}. The expression for $\Omega$ in our notation is given in \cite[(5.2)]{BGV}.

\begin{remark}
We note that $\Omega$ is a holomorphic section of $\Pi_\ast\mathbb{L}$
over $B$. This can be seen immediately by examining the expression of $\Omega$ in \cite[p.~20]{Fay}.
Consequently, the section $\mathbf{r}(\Omega)$ (which turns out to be a section of $\widehat{\mathcal{V}}$)
is also holomorphic, where $\mathbf{r}$ is the map in \eqref{eqr} (see also \cite[p.~300]{ZT}); the projective
structure corresponding to $\mathbf{r}(\Omega_b)$ is denoted there by $R_{X_b}$. It should be clarified that
$\Omega$ does not exist globally on $\Mg$; its construction depends on the choice of a symplectic basis.
Indeed, the Canonical Bidifferential $\Omega_b$ is not canonical; for example, it is not fixed by the
action of the automorphism group $\operatorname{Aut}(C_b)$.
\end{remark}

\begin{lemma}
\label{lemma:multiplication}
The following diagram of maps is commutative:
$$\begin{tikzcd}
& & \pi_\ast \mathcal{K}^{\otimes 2} \arrow[d] \\
\Pi_\ast ({\widetilde p}^*{\mathcal K}\otimes {\widetilde q}^*{\mathcal K}) \arrow[r] \arrow[rru, "\mathbf{m}"] & \Pi_\ast \mathbb{L} \arrow[r, "\mathbf{r}"] & \mathcal{V}
\end{tikzcd}$$
where $\mathbf{m}$ is given fiber-wise by the natural multiplication map 
$$\mathbf{m}_b\,:\, H^0(S_b,\, p_1^\ast K_{C_b}\otimes p_2^\ast K_{C_b})\,= \,H^0(C_b,\, K_{C_b})^{\otimes 2}\,
\longrightarrow\, H^0(C_b,\, 2 K_{C_b}),$$ and the vertical map on the right is the inclusion map
of the kernel of $\Psi$ in \eqref{psi}; the map $\mathbf r$ is constructed in \eqref{eqr}.
\end{lemma}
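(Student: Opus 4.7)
The plan is to verify the commutativity of the triangle fiber-wise at each $b\,\in\, B$ by a direct local computation. Recall that the horizontal arrow $\Pi_\ast({\widetilde p}^*{\mathcal K}\otimes {\widetilde q}^*{\mathcal K})\,\lra\, \Pi_\ast \mathbb{L}$ is induced by the natural sheaf inclusion ${\widetilde p}^*{\mathcal K}\otimes {\widetilde q}^*{\mathcal K}\,\hookrightarrow\, ({\widetilde p}^*{\mathcal K}\otimes {\widetilde q}^*{\mathcal K})\otimes \mathcal{O}(2{\bf\Delta}_B)\,=\,\mathbb{L}$, and $\mathbf{r}$ is the restriction to the $3{\bf\Delta}_B$-truncation as in \eqref{eqr}.

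Fix $b\,\in\, B$, set $C\,:=\,C_b$, $S\,:=\,C\times C$, and take $f\,\in\, H^0(S,\, p_1^*K_C\otimes p_2^*K_C)$. Near a point $(x,\,x)\,\in\, \Delta$ choose a local coordinate $z$ on $C$ around $x$, use coordinates $(w,\,z)$ on $S$, and set $t\,:=\,w-z$, so that $\Delta\,=\,\{t\,=\,0\}$. Under the canonical isomorphism $p_1^*K_C\otimes p_2^*K_C\,\cong\, K_S$ sending $dw\otimes dz\,\mapsto\, dw\wedge dz\,=\,dt\wedge dz$, the section $f$ reads locally as $f(z+t,\,z)\,dt\wedge dz$. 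A local section of $\mathbb{L}\,=\,K_S(2\Delta)$ can be written uniquely in the form $\frac{g(t,z)}{t^2}\,dt\wedge dz$; for our $f$ we have $g(t,\,z)\,=\,t^2 f(z+t,\,z)$, which vanishes to order $2$ at $t\,=\,0$. Reducing modulo $t^3$, the class of $\mathbf{r}(f)$ in $K_S(2\Delta)|_{3\Delta}$ is represented by $f(z,\,z)\,dt\wedge dz$.

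The first observation is that $\mathbf{r}(f)$ lies in $\ker(\Psi)\,=\,\pi_\ast\mathcal{K}^{\otimes 2}$, so the composition with the vertical inclusion in the diagram makes sense. Indeed $K_S(2\Delta)|_{2\Delta}$ only records the polar coefficients $a_{-2}(z)/t^2+a_{-1}(z)/t$, and the expression for $\mathbf{r}(f)$ has no such terms. From the short exact sequence $0\,\lra\, K_S|_\Delta\,\lra\, K_S(2\Delta)|_{3\Delta}\,\lra\, K_S(2\Delta)|_{2\Delta}\,\lra\, 0$, the identification $\ker(\Psi)\,\cong\, K_S|_\Delta\,\cong\, K_C^{\otimes 2}$ sends the representative $f(z,\,z)\,dt\wedge dz$ to $f(z,\,z)\,dz^2$, since $dw\wedge dz|_\Delta$ corresponds to $dz^2$ under the canonical identification $(p_1^*K_C\otimes p_2^*K_C)|_\Delta\,\cong\, K_C^{\otimes 2}$.

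Finally, $\mathbf{m}(f)$ is by definition the restriction of $f\,=\,f(w,z)\,dw\otimes dz$ to the diagonal, namely $f(z,\,z)\,dz^2$. The two outputs agree, so the diagram commutes. The main obstacle is just the careful bookkeeping of the canonical identifications $p_1^*K_C\otimes p_2^*K_C\cong K_S$, $K_S|_\Delta\cong K_C^{\otimes 2}$, and $\ker(\Psi)\cong \pi_\ast\mathcal{K}^{\otimes 2}$, ensuring that no spurious multiplicative factors are introduced along the way; the explicit local calculation above confirms that none do, and the argument is clearly uniform in $b$.
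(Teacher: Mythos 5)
Your argument is correct. Where the paper identifies $\ker\Psi$ abstractly --- it runs the snake lemma on the two exact sequences $0\to \widetilde p^*\mathcal K\otimes\widetilde q^*\mathcal K\otimes\mathcal O(-k{\bf\Delta}_B)\to\mathbb L\to\mathbb L/(\mathcal O(-(k+1){\bf\Delta}_B)\otimes\mathbb L)\to 0$ to get $\ker\Psi'\cong(\widetilde p^*\mathcal K\otimes\widetilde q^*\mathcal K)|_{{\bf\Delta}_B}$, pushes forward, and then invokes the factorization $\mathbf m_b=i_b\circ d_b$ of the multiplication map through restriction to the diagonal --- you carry out the same identifications by an explicit local computation in the coordinates $(t,z)=(w-z,z)$. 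The mathematical skeleton is identical: (i) $\mathbf r(f)$ has trivial image in $\mathcal V_2$ because $t^2f(z+t,z)$ vanishes to order $2$ along $\Delta$, so it lands in $\ker\Psi$; (ii) $\ker\Psi\cong K_S|_\Delta\cong K_C^{\otimes 2}$; (iii) under these identifications the composite is restriction to the diagonal, which on decomposable tensors is exactly the multiplication map. What your version buys is an explicit check that the canonical isomorphisms introduce no spurious multiplicative constants (the paper delegates this to the word ``canonical''), at the cost of having to note that the coordinate computation is independent of the chart and glues --- which it does, since every map involved is defined chart-independently. The only point stated slightly too quickly is that $\mathbf m(f)$ ``is by definition'' the restriction of $f$ to the diagonal: strictly, $\mathbf m_b$ is defined as the multiplication $H^0(K_{C_b})^{\otimes 2}\to H^0(2K_{C_b})$, and its agreement with restriction to $\Delta$ is a (one-line, but separate) verification on decomposable tensors $p_1^*\omega\otimes p_2^*\omega'\mapsto\omega\omega'$, which is precisely the identity $\mathbf m_b=i_b\circ d_b$ the paper records.
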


\begin{proof}
Consider the following commutative diagram --- with exact rows --- of coherent sheaves on ${\mathcal C}\times_B{\mathcal C}$:
$$\begin{tikzcd}
0 \arrow[r] & {\widetilde p}^*{\mathcal K}\otimes {\widetilde q}^*{\mathcal K} \otimes
{\mathcal O}_{{\mathcal C}\times_B{\mathcal C}}(-{\bf\Delta}_B) \arrow[d] \arrow[r] & \mathbb{L} \arrow[r, "\mathbf{r'}"] \arrow[d, "\operatorname{Id}"] & {\mathbb L}/({\mathcal O}_{{\mathcal C}\times_B
{\mathcal C}}(-3{\bf\Delta}_B)\otimes {\mathbb L}) \arrow[r] \arrow[d, "\Psi'"] & 0 \\
0 \arrow[r] & {\widetilde p}^*{\mathcal K}\otimes {\widetilde q}^*{\mathcal K} \arrow[r] & \mathbb{L} \arrow[r] & {\mathbb L}/({\mathcal O}_{{\mathcal C}\times_B
{\mathcal C}}(-2{\bf\Delta}_B)\otimes {\mathbb L}) \arrow[r] & 0.
\end{tikzcd}$$
Note that by construction, $\mathbf{r}\,=\,\Pi_\ast \mathbf{r'}$ and $\Psi\,=\,\Pi_\ast \Psi'$ (see \eqref{eqr}). By the snake
lemma,
$$\ker \Psi'\,\cong\, ({\widetilde p}^*{\mathcal K}\otimes {\widetilde q}^*{\mathcal K})/
({\widetilde p}^*{\mathcal K}\otimes {\widetilde q}^*{\mathcal K} \otimes
{\mathcal O}_{{\mathcal C}\times_B{\mathcal C}}(-{\bf\Delta}_B))\,=\,
({\widetilde p}^*{\mathcal K}\otimes {\widetilde q}^*{\mathcal K})\big\vert_{{\bf\Delta}_B}.$$
Using this identification we have the diagram
\begin{equation}\label{eq:bigd}
\begin{tikzcd}
&& ({\widetilde p}^*{\mathcal K}\otimes {\widetilde q}^*{\mathcal K})|_{{\bf\Delta}_B}\arrow[d] \\
{\widetilde p}^*{\mathcal K}\otimes {\widetilde q}^*{\mathcal K} \arrow[r] \arrow[rru, "\mathbf{m'}"]
& \mathbb{L} \arrow[r, "\mathbf{r'}"] & \mathbb{L}/(\mathbb{L}\otimes
{\mathcal O}_{{\mathcal C}\times_B{\mathcal C}}(-3{\bf\Delta}_B)),
\end{tikzcd}
\end{equation}
where the vertical map on the right is the inclusion map of the kernel of $\Psi'$ while $\mathbf{m'}$
is the natural projection. Taking push-forward, using $\Pi$, of the diagram in \eqref{eq:bigd} we obtain the following:
\begin{equation}\label{eq:bigd2}
\begin{tikzcd}
&&\Pi_\ast ({\widetilde p}^*{\mathcal K}\otimes {\widetilde q}^*{\mathcal K})|_{{\bf\Delta}_B})\arrow[d] \\
\Pi_\ast({\widetilde p}^*{\mathcal K}\otimes {\widetilde q}^*{\mathcal K}) \arrow[r] \arrow[rru, "\Pi_\ast\mathbf{m'}"]
& \Pi_\ast \mathbb{L} \arrow[r, "\mathbf{r}"] & \mathcal{V};
\end{tikzcd}
\end{equation}
the vertical map on the right is the kernel of $\Pi_\ast \Psi'\,=\,\Psi$.

Now consider the restriction map $d_b\,:\,H^0(K_{S_b})\, \longrightarrow\, H^0(K_{S_b}|_{\Delta_b})$ and the multiplication map
$$\mathbf{m}_b\,:\,H^0(S_b,\, K_{S_b})\,=\, H^0(C_b,\, K_{C_b})^{\otimes 2}
\, \longrightarrow\, H^0(C_b,\, 2 K_{C_b}).$$ These coincide respectively with the restrictions of $\Pi_\ast \mathbf{m'}$ and $\mathbf{m}$
to $b$. There is a canonical isomorphism $i_b\,:\,H^0(K_{S_b}|_{\Delta_b})\,\longrightarrow\, H^0( K_{C_b}^{\otimes 2})$
for which ${\textbf m}_b\,=\,i_b\circ d_b$. Fiber-wise, the isomorphisms $i_b$ give an isomorphism
$$\mathbf{i}\ :\ \Pi_\ast ((\widetilde p^*{\mathcal K}\otimes
{\widetilde q}^*{\mathcal K})|_{{\bf\Delta}_B})\ \longrightarrow\ \pi_\ast \mathcal{K}^{\otimes 2}$$
fitting in the commutative diagram
\begin{equation}
\label{eq:theiso}
\begin{tikzcd}
& \Pi_\ast(\widetilde p^*{\mathcal K}\otimes {\widetilde q}^*{\mathcal K}) \arrow[ld, "\Pi_\ast\mathbf{m'}"'] \arrow[rd, "\mathbf{m}"] & \\
\Pi_\ast ((\widetilde p^*{\mathcal K}\otimes {\widetilde q}^*{\mathcal K})|_{{\bf\Delta}_B}) \arrow[rr, "\mathbf{i}"] && \pi_\ast \mathcal{K}^{\otimes 2}.
\end{tikzcd}
\end{equation}
Take the morphism $\mathbf{m}$ in the statement of the lemma to be the composition of maps
$\mathbf{i}\circ\Pi_\ast\mathbf{m'}$ (see \eqref{eq:bigd2} and \eqref{eq:theiso} for these maps). Now the
lemma follows from \eqref{eq:bigd2} and \eqref{eq:theiso}.
\end{proof}

Consider the Kodaira--Spencer deformation map for the family $\pi$
\begin{equation}\label{eks}
\rho_b\ :\ T_b B\ \longrightarrow\ H^1(C_b,\, T_{C_b})
\end{equation}
and also the Serre duality $s_b\,:\, H^0(C_b,\, 2K_{C_b})\,\stackrel{\sim}{\longrightarrow}\, 
H^1(C_b,\, T_{C_b})^\ast$ given by the pairing
\begin{equation}\label{eq:serre}
H^1(C_b,\, T_{C_b})\otimes H^0(C_b,\, 2 K_{C_b})\, \longrightarrow\, \mathbb{C},\ \ \
\xi \otimes \omega\,\longmapsto\, \int_{C_b} \xi\cup \omega.
\end{equation}
Let
$$
\lambda_b\ :\ H^0(C_b,\, 2K_{C_b})\ \longrightarrow\ T^\ast_b B
$$
be the composition of maps
$$H^0(C_b,\, 2K_{C_b})\, \xrightarrow{\,\,\,s_b\,\,\,}\, H^1(T_{C_b})^\ast
\, \xrightarrow{\,\,\,(\rho_b)^\ast\,\,\,}\, T^\ast_b B,$$
where $\rho_b$ and $s_b$ are the maps in \eqref{eks} and \eqref{eq:serre} respectively. It produces
a morphism $$\lambda\ :\ \pi_\ast\left(\mathcal{K}^{\otimes 2}\right)\ \longrightarrow\ T^\ast B.$$

\begin{lemma}
\label{lemma:lambda}
The following holds:
$$
j^\ast d \tau_{ij}\ =\ \lambda(-\omega_{i}\omega_j),
$$
where $j$ is the Torelli map in \eqref{tau} and $\omega_i$, $1\, \leq\, i\, \leq\, g$, are as in \eqref{eo}.
\end{lemma}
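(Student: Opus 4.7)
The plan is to test the claimed equality on an arbitrary tangent vector $v\in T_bB$. Taking a smooth curve $b(t)$ in $B$ with $b(0)=b$ and $\dot b(0)=v$, the definitions of the period map \eqref{tau} and of the standard coordinates $\tau_{ij}$ on $\mathbb{H}_g$ give
$$(j^\ast d\tau_{ij})(v)\ =\ \frac{d}{dt}\bigg\vert_{t=0}\int_{b_i(b(t))}\omega_j(b(t)),$$
while the definitions of $\lambda$ and of the Serre pairing \eqref{eq:serre} give
$$\lambda(-\omega_i\omega_j)(v)\ =\ -\int_{C_b}\rho_b(v)\cup(\omega_i\omega_j).$$
Under the Dolbeault description of Serre duality, if $\mu\in A^{0,1}(T_{C_b})$ is a Beltrami differential representing $\rho_b(v)$, the right hand side equals $-\int_{C_b}\mu\cdot\omega_i\omega_j$. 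Thus the lemma amounts to Rauch's variational formula for the period matrix.

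To prove it, I would trivialize the family $\pi$ smoothly near $b$ by diffeomorphisms $f_t\,:\,C_b\,\to\,C_{b(t)}$ with $f_0=\operatorname{id}_{C_b}$, chosen so that $f_{t\ast}b_i(b)=b_i(b(t))$, and take $\mu$ to be the Beltrami differential associated to the variation of complex structure pulled back by $f_t$. Setting $\alpha_j(t)\,:=\,f_t^\ast\omega_j(b(t))$, one has $\int_{b_i(b(t))}\omega_j(b(t))=\int_{b_i(b)}\alpha_j(t)$, hence the left hand side above equals $\int_{b_i(b)}\dot\alpha_j(0)$. Each $\alpha_j(t)$ is a closed $1$-form on the fixed surface $C_b$, so $\dot\alpha_j(0)$ is closed as well. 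Griffiths' formula for the Gauss--Manin connection identifies the $(0,1)$-component of $\dot\alpha_j(0)$ with $-\mu\cdot\omega_j$ modulo $\overline\partial$-exact terms; its $(1,0)$-component is cohomologically trivial because differentiating the normalization \eqref{eo} yields $\int_{a_k}\dot\alpha_j(0)=0$ for every $k$.

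Finally, Riemann's bilinear relation applied to the closed $1$-form $\dot\alpha_j(0)$ against $\omega_i$, using these vanishing $a$-periods, gives
$$\int_{b_i(b)}\dot\alpha_j(0)\ =\ \int_{C_b}\omega_i\wedge\dot\alpha_j(0)\ =\ \int_{C_b}\omega_i\wedge(-\mu\cdot\omega_j)\ =\ -\int_{C_b}\mu\cdot\omega_i\omega_j,$$
where in the middle step the $(1,0)$-component of $\dot\alpha_j(0)$ contributes nothing because $\omega_i$ is of type $(1,0)$, and terms of the form $\omega_i\wedge\overline\partial f$ integrate to zero by Stokes' theorem (as $d(f\omega_i)=\omega_i\wedge\overline\partial f$). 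This coincides with $\lambda(-\omega_i\omega_j)(v)$, concluding the proof. The main obstacle I anticipate is the sign bookkeeping: the definition of the Kodaira--Spencer map \eqref{eks}, the cup product on Dolbeault cohomology, the Serre pairing, and the Dolbeault representative of $\rho_b(v)$ each carry sign conventions that must be tracked consistently in order to recover the precise minus sign appearing in $\lambda(-\omega_i\omega_j)$; modulo this, the argument is the classical computation of Rauch.
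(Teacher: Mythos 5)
Your argument is correct in substance but follows a different (equally classical) route from the paper's. The paper reduces the statement to the differential of the period map valued in the Grassmannian $\operatorname{Grass}(g, H^1(C,\mathbb{C}))$: writing $\omega_i = \alpha_i + \sum_h \tau_{ih}\beta_h$ in a topological basis, it computes $dH(v)$ once from the explicit coordinates (producing $\sum_h j^\ast d\tau_{ih}(v)\beta_h$) and once from the identification $dH(v) = \sum_i \omega_i^\ast \otimes (\rho(v)\cup\omega_i)$ cited from ACGH, then pairs against $\omega_j$ via $\int_C \beta_h\wedge\omega_j$. You instead trivialize the family, differentiate the pulled-back forms $\alpha_j(t) = f_t^\ast\omega_j(b(t))$, and use Riemann's reciprocity together with the vanishing of the differentiated $a$-periods; the external input is the identification of the $(0,1)$-part of $\dot\alpha_j(0)$ with $\pm\mu\cdot\omega_j$, which plays exactly the role of the ACGH formula in the paper. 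Both proofs therefore outsource the same essential fact (the derivative of the Hodge filtration is cup product with the Kodaira--Spencer class) and both leave the final sign to a convention --- the paper does this explicitly in the remark following the lemma, where the Serre pairing is normalized to agree with Zograf--Takhtadzhyan. Your version is closer to Rauch's original computation and makes the Beltrami differential visible, at the cost of having to justify the smooth trivialization compatible with the marking; the paper's version stays at the level of cohomology classes and avoids choosing representatives. One small inaccuracy: your parenthetical claim that the $(1,0)$-component of $\dot\alpha_j(0)$ is \emph{cohomologically trivial} does not follow from $\int_{a_k}\dot\alpha_j(0)=0$ alone (the $a$-periods of the two Hodge components need not vanish separately), but this is harmless since your computation only uses the pointwise vanishing of $\omega_i\wedge(\cdot)^{1,0}$ and the vanishing $a$-periods of the full closed form in the reciprocity step.
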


\begin{proof}
Fix a point $b_0\,\in\, B$ and take a tangent vector $v\,\in\, T_{b_0} B$. Let $C\,=\,C_{b_0}$ and denote
$\omega_i\,=\,\omega_i(b_0)$, $a_i\,=\, a_i(b_0)$ and $b_i\,=\, b_i(b_0)$ for $1\, \leq\, i\, \leq\, g$. By
definition,
$$\lambda(-\omega_i\omega_j)(v)\ =\ \int_C \rho(v)\cup (-\omega_i\omega_j).$$

We need to prove the following:
\begin{equation}\label{eq:dedot}
j^\ast d\tau_{ij}(v)\ =\ \int_{C} \rho(v)\cup (-\omega_i\omega_j).
\end{equation}
This is quite a standard computation (see \cite[Ch.~XI, \S~8]{ACGH}); we give the details for convenience.
Fix a basis $\{\alpha_1,\, \cdots ,\, \alpha_g,\, \beta_1,\, \cdots ,\, \beta_g\}$ of $H^1(C,\, \C)$ dual to the marking, so we have
$$\int_{a_i}\alpha_j\,=\,\delta_{ij},\qquad \int_{a_i}\beta_j\,=\,0,\qquad \int_{b_i}\alpha_j\,=\,0,\qquad \int_{b_i}\beta_j\,=\,\delta_{ij}.$$
Define the map to the Grassmannian parametrizing $g$ dimensional subspaces in $H^1(C,\,\C)$
\begin{equation}
H\,:\, B\,\longrightarrow \,\operatorname{Grass}(g,\, H^1(C,\C)),\ \ \, b\,\longmapsto\, \operatorname{span}\left(\alpha_i+ \sum_{h=1}^g j(b)_{ih}\beta_h\right)_{i=1,\cdots ,g}.
\end{equation}
Comparing the integrals on the $a_i$ and $b_i$, one easily sees that
\begin{equation}
\label{eq:acg1}
\omega_i\ =\ \alpha_i+ \sum_{h=1}^g j(b_0)_{ih}\beta_h
\end{equation}
as elements of $H^1(C,\,\C)$, and therefore we have $H(b_0)\,=\,H^{1,0}(C_{b_0})$.
Then we can identify 
\begin{equation}
\label{eq:grasstangent}
{T_{H(b_0)}\operatorname{Grass}(g,\,H^1(C,\C))} \ \cong\ \operatorname{Hom}\left(H^{1,0}(C),\, H^1(C,
\C)/H^{1,0}(C)\right)\ \cong\ H^{1,0}(C)^\ast\otimes H^{0,1}(C).
\end{equation}
Let $\{\omega_1^\ast,\,\cdots ,\,\omega_g^\ast\}$ denote the dual basis of $H^0(C,\, K_C)^\ast\,=\,H^{1,0}(C)^\ast$. Then through the identification in \eqref{eq:grasstangent}
we have, considering equation \eqref{eq:acg1}, $$d H (v)\ =\ \sum_i \omega_i^\ast\otimes \left[\sum_h j^\ast d\tau_{ih}(v) \beta_h\right],$$
where $[\cdot]$ denotes the class in $H^{0,1}(C)\,=\,H^1(C,\C)/H^{1,0}(C)$. On the other hand, we have
$$d H(v)\ =\ \sum_i \omega_i^\ast\otimes (\rho(v)\cup \omega_i)$$
(see \cite[p. 220 - 223]{ACGH}).
Then $\rho(v)\cup \omega_i\,=\, \left[\sum_h j^\ast d\tau_{ih}(v) \beta_h\right]$, and $$\int_C \rho(v)\cup (-\omega_i\omega_j)
\,=\,-\int_C (\rho(v)\cup \omega_i)\wedge \omega_j\,=\,-\sum_h j^\ast d\tau_{ih}(v)\int_C \beta_h\wedge \omega_j\,=\,j^\ast d\tau_{ij}(v).$$
This proves \eqref{eq:dedot}, and the proof of the lemma is completed.
\end{proof}

\begin{remark}
The pairing in \eqref{eq:serre} is unique up to a sign.
The choice of the pairing in \eqref{eq:serre} is consistent with \cite[(2.2)]{ZT}. This aligns
the scalar multiples of this work with those of \cite{BGT} (compare Proposition \ref{prop:debareta} below and
\cite[Theorem 5.6]{BGT}). The choice of
the opposite sign in Serre duality yields of course the opposite identification (see \cite[p.~58]{Ma};
the pairing is defined on p.~56 of \cite{Ma}).
\end{remark}

\begin{theorem}\label{prop:debarisfs}
Set
\begin{equation}\label{eq:wudoppio}
w(\tau)\ :=\ \sum_{u\in U} |\theta_u(0;\,\tau)|^2.
\end{equation}
On $B$,
\begin{equation}\label{eq:betatheta}
\beta^\theta(b)\ =\ (\Omega_b)|_{3\Delta}+4\pi\sqrt{-1}\sum_{1\leq j\leq i\leq g}
\frac{\partial \log w}{\partial \tau_{ij}}(j(b))\omega_i(b)\omega_j(b);
\end{equation}
here $\pi_\ast\mathcal{K}^{\otimes 2}\,\subset\, \mathcal{V}$ is the kernel of $\Psi$ in
\eqref{psi}.

If $\pi_\ast\mathcal{K}^{\otimes 2}$ is identified with $T^\ast B$ by the mapping
$-\omega_i\omega_j\ \longmapsto\ j^\ast (d\tau_{ij})$, then
\begin{equation}\label{eq:debarbetatheta}
\overline{\partial}\beta^\theta\ =\ 8\pi j^\ast \Theta_g^\ast \omega_{FS}
\end{equation}
(see \eqref{pr-strs} and \eqref{k2}), where $\omega_{FS}$ is the
Fubini-Study metric on $\mathbb{P}^{2^g-1}$ defined in \eqref{eq:omegafs}.
\end{theorem}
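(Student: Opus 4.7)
The plan is to derive \eqref{eq:betatheta} directly from the formula \eqref{eq:sigma} for $\sigma$ via Lemma \ref{lemma:multiplication} together with the heat equation for the second order theta functions, and then to differentiate the resulting expression to obtain \eqref{eq:debarbetatheta}.

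First I would apply the restriction map $\mathbf{r}$ to \eqref{eq:sigma}. Since $\beta^\theta = \mathbf{r}(\sigma)$ by \eqref{pr-strs}, and Lemma \ref{lemma:multiplication} identifies the image of $p_1^\ast\omega_i\wedge p_2^\ast\omega_j$ under $\mathbf{r}$ (landing in $\ker\Psi\cong \pi_\ast\mathcal{K}^{\otimes 2}$) with the product $\omega_i\omega_j\in \pi_\ast\mathcal{K}^{\otimes 2}$, this gives
$$\beta^\theta(b)\ =\ \Omega_b|_{3\Delta} + \frac{1}{2\,w(j(b))}\sum_{i,j=1}^g\sum_{u\in U} \overline{\theta_u(0;j(b))}\,\frac{\partial^2 \theta_u}{\partial z_i\partial z_j}(0;j(b))\,\omega_i(b)\,\omega_j(b).$$

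Next I would invoke the heat equation for the functions in \eqref{eq:secondorder}. Differentiating \eqref{eq:secondorder} and using the symmetric convention $\tau_{ji}=\tau_{ij}$ (under which $\partial/\partial\tau_{ij}$ applied to $(m+u)^t\tau(m+u)$ equals $(1+\delta_{ij})(m+u)_i(m+u)_j$), a direct comparison produces
$$\frac{\partial^2\theta_u}{\partial z_i\partial z_j}(0;\tau)\ =\ 4\pi\sqrt{-1}\,(1+\delta_{ij})\,\frac{\partial \theta_u}{\partial \tau_{ij}}(0;\tau).$$
Splitting $\sum_{i,j=1}^g$ as $\sum_{i=j}+2\sum_{i<j}$ and using the elementary identity $\partial\log w/\partial \tau_{ij}= w^{-1}\sum_u \overline{\theta_u}\,\partial \theta_u/\partial \tau_{ij}$, the $(1+\delta_{ij})$ factor together with the factor $2$ from the off-diagonal terms combine, and \eqref{eq:betatheta} drops out.

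For the second part I would take $\overline{\partial}$ of \eqref{eq:betatheta}. By the Remark following \eqref{eq:eta}, $\mathbf{r}(\Omega)=\Omega|_{3\Delta}$ is holomorphic in $b$, and the $\omega_i$ are holomorphic as well, so $\overline{\partial}(\Omega|_{3\Delta})=0$ and $\overline{\partial}(\omega_i\omega_j)=0$; hence $\overline{\partial}$ only hits the scalar coefficients $(\partial\log w/\partial\tau_{ij})\circ j$, giving
$$\overline{\partial}\beta^\theta\ =\ 4\pi\sqrt{-1}\sum_{j\le i,\, l\le k} j^\ast\!\left(\frac{\partial^2 \log w}{\partial \tau_{ij}\,\partial \overline{\tau_{kl}}}\right)\,j^\ast d\overline{\tau_{kl}}\otimes \omega_i\omega_j.$$
Under the identification $\omega_i\omega_j\leftrightarrow -j^\ast d\tau_{ij}$ supplied by Lemma \ref{lemma:lambda}, and using $d\overline{\tau_{kl}}\wedge d\tau_{ij}=-d\tau_{ij}\wedge d\overline{\tau_{kl}}$, the two minus signs cancel and I obtain
$$\overline{\partial}\beta^\theta\ =\ 4\pi\sqrt{-1}\;j^\ast(\partial\overline{\partial}\log w).$$
Since $|\Theta(\tau)|^2 = w(\tau)$, \eqref{eq:omegafs} gives $\Theta^\ast\omega_{FS}=\tfrac{\sqrt{-1}}{2}\,\partial\overline{\partial}\log w$, so $\partial\overline{\partial}\log w = -2\sqrt{-1}\,\Theta^\ast \omega_{FS}$ and \eqref{eq:debarbetatheta} follows because $4\pi\sqrt{-1}\cdot(-2\sqrt{-1})=8\pi$.

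The main obstacle is essentially bookkeeping: getting the $(1+\delta_{ij})$ factor in the heat equation exactly right (and consistent with the symmetric-$\tau$ convention), accounting for the doubling of off-diagonal terms when passing to the unordered sum $\sum_{j\le i}$ in \eqref{eq:betatheta}, and keeping track of signs when composing the wedge product with the Serre-duality identification of Lemma \ref{lemma:lambda}. Conceptually, however, the computation is transparent: once the heat equation is used, the conjugate appearing in $|\theta_u|^2$ is the sole antiholomorphic ingredient in $\beta^\theta$, and $\overline{\partial}$ of the corresponding coefficient is precisely the Hessian entry that builds up $\Theta^\ast \omega_{FS}$.
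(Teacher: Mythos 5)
Your proposal is correct and takes essentially the same route as the paper's proof: expand \eqref{eq:sigma} using the heat equation for second-order theta functions, push the result through $\mathbf{r}$ and Lemma \ref{lemma:multiplication} to obtain \eqref{eq:betatheta}, and then use the holomorphicity of $\Omega$, $\mathbf{r}$ and $j$ together with the identification $\omega_i\omega_j\longmapsto -j^\ast(d\tau_{ij})$ to reduce $\overline{\partial}\beta^\theta$ to $4\pi\sqrt{-1}\,j^\ast(\partial\overline{\partial}\log w)\,=\,8\pi\, j^\ast\Theta^\ast\omega_{FS}$. The only slip is in your parenthetical justification of the heat equation: with $\tau_{ij}$, $i\geq j$, as independent coordinates, $\partial\big((m+u)^t\tau(m+u)\big)/\partial\tau_{ij}$ equals $(2-\delta_{ij})(m+u)_i(m+u)_j$ rather than $(1+\delta_{ij})(m+u)_i(m+u)_j$; the heat equation you then state, $\partial^2\theta_u/\partial z_i\partial z_j=4\pi\sqrt{-1}(1+\delta_{ij})\,\partial\theta_u/\partial\tau_{ij}$, is nonetheless the correct one (since $8\pi\sqrt{-1}/(2-\delta_{ij})=4\pi\sqrt{-1}(1+\delta_{ij})$ for $\delta_{ij}\in\{0,1\}$) and is exactly the form the paper uses.
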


\begin{proof}
On $\mathbb H_g$, consider coordinates $\tau_{ij}$, with
$i \,\geq\, j$; we have $\tau_{ji}\,=\,\tau_{ij}$. Then the heat equation for second order theta
functions holds (see \cite[p. 20]{vg}):
\begin{gather*}
4\pi \sqrt{-1} (1+\delta_{ij}) \frac{\partial \theta_u}{\partial
\tau_{ij}} \,\,=\,\, \frac{\partial^2 \theta_u}{\partial z_i
\partial z_j}.
\end{gather*}
Expanding \eqref{eq:sigma} we have the following (notice the heat equation is used in the third equality):
\begin{gather*}
\sigma(b) \ =\ \Omega_b+\frac{1}{2 w(j(b))} \sum_{i,j=1}^g
\overline{\theta_u(0;\,j(b))}\, \frac{\partial^2 \theta_u}{\partial
z_i\partial z_j }(0;j(b)) p_1^\ast \omega_i\wedge p_2^\ast
\omega_j\\
=\ \Omega_b+\frac{1}{2w(j(b))} \sum_{i=1}^g \overline{\theta_u(0;\,j(b))}\,
\frac{\partial^2 \theta_u}{\partial z^2_i }(0;j(b)) p_1^\ast
\omega_i\wedge p_2^\ast \omega_i +
\\
+\,\, \frac{1}{2w(j(b))} \sum_{i >j} \overline{\theta_u(0;\,j(b))}\,
\frac{\partial^2 \theta_u}{\partial z_i\partial z_j }(0;j(b)) \left
( p_1^\ast \omega_i\wedge p_2^\ast \omega_j + p_1^\ast
\omega_j\wedge p_2^\ast \omega_i\right )\\
=\ \Omega_b+ \frac{4 \pi \sqrt{-1}}{w(j(b))} \sum_{i=1}^g
\overline{\theta_u(0;\,j(b))}\, \frac{\partial \theta_u}{\partial
\tau_{ii} }(0;j(b)) p_1^\ast \omega_i\wedge p_2^\ast \omega_i +
\\
+\,\, \frac{2 \pi \sqrt{-1}}{w(j(b))} \sum_{i >j}
\overline{\theta_u(0;\,j(b))}\, \frac{\partial \theta_u}{\partial
\tau_{ij} }(0;j(b)) \left ( p_1^\ast \omega_i\wedge p_2^\ast
\omega_j + p_1^\ast \omega_j\wedge p_2^\ast
\omega_i \right )\\
=\ \Omega_b+ \frac{2 \pi \sqrt{-1} }{w(j(b))} \sum_{i \geq j}
\overline{\theta_u(0;\,j(b))}\, \frac{\partial \theta_u}{\partial
\tau_{ij} }(0;j(b)) \left ( p_1^\ast \omega_i\wedge p_2^\ast
\omega_j + p_1^\ast \omega_j\wedge p_2^\ast \omega_i \right )\\
=\ \Omega_b + \frac{2\pi\sqrt{-1}}{w(j(b))} \sum_{i\geq j} \frac{\partial w}{\partial \tau_{ij}}(j(b))
\left ( p_1^\ast \omega_i\wedge p_2^\ast
\omega_j + p_1^\ast \omega_j\wedge p_2^\ast \omega_i \right ).
\end{gather*}

Consider $\mathbf{r}$ in \eqref{eqr}.
By definition, $\beta^\theta\,=\,\mathbf{r}(\sigma)$ (see \eqref{pr-strs}).
We have $$\mathbf{r}(\Omega)(b)\ =\ (\Omega_b)\big\vert_{3\Delta}$$
and the section $$\mathbf{r}(p_1^\ast \omega_i\wedge p_2^\ast \omega_j)\ =\
\mathbf{m}(p_1^\ast \omega_i\wedge p_2^\ast \omega_j)$$
sends any $b\, \in\, B$ to $\omega_i(b)\omega_j(b)$
by Lemma \ref{lemma:multiplication}. Thus \eqref{eq:betatheta} is proved.

Making the identification $j^* ( d \tau_{ij})\, =\, - \omega_i \cdot\omega_j$ we get 
$$\beta^\theta(b)\, =\,\sigma(b)|_{3\Delta}\,=\,
\Omega_b|_{3\Delta}-4\pi \sqrt{-1} \sum_{i\geq j} \frac{\partial \log w}{\partial \tau_{ij}}(j(b))
\cdot j^\ast (d \tau_{ij})(b).$$
Then 
$$\beta^\theta\ =\ \mathbf{r}(\Omega)-4\pi \sqrt{-1} j^\ast(\partial \log w);$$
here we, again, identify $\Omega^1_B\,\subset\, \mathcal{V}$ with the kernel of $\Psi$. Since $\debar
\Omega\,=\,0$ while $\mathbf{r}$ and $j$ are holomorphic, it follows that
\begin{equation}\label{ej}
\overline{\partial} \beta^\theta\,\, =\,\, -{4\pi \sqrt{-1} } \, j^*( \overline{\partial}
\partial \log w )\,\, =\,\, 4\pi \sqrt{-1} \, j^*( \partial \overline{\partial} \log w).
\end{equation}
This proves \eqref{eq:debarbetatheta}.
\end{proof}

\begin{proposition}\label{lp}
The section $\debar \beta^\theta$ (see \eqref{ej}) is a nondegenerate metric outside the hyperelliptic
locus. More precisely, the quadratic form associated to $\debar \beta^\theta$
only vanishes transversally to the hyperelliptic locus and nowhere else (see \cite[p.~223]{ACGH} for the
precise meaning of “transversally" in this context).
\end{proposition}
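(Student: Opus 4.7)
The plan is to leverage Theorem \ref{prop:debarisfs}, which gives $\overline{\partial}\beta^\theta = 8\pi\, j^\ast \Theta^\ast\omega_{FS}$. For any holomorphic map $F\colon X\to\mathbb{P}^N$, the pullback $F^\ast\omega_{FS}$ is a semi-positive Hermitian form whose radical at $x$ equals $\ker dF_x$; this follows from $\omega_{FS} = \tfrac{\sqrt{-1}}{2}\partial\overline{\partial}\log|Z|^2$ and the Cauchy--Schwarz inequality. Applied to $F = \Theta\circ j$, this identifies the kernel of $\overline{\partial}\beta^\theta$ at $[C]$ with $\ker d(\Theta\circ j)_{[C]}$. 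The proposition therefore amounts to the equality
\begin{equation*}
\ker d(\Theta\circ j)_{[C]}\,=\,\ker dj_{[C]}
\end{equation*}
together with the description of $\ker dj_{[C]}$ as the normal bundle to the hyperelliptic locus at hyperelliptic points.

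The description of $\ker dj_{[C]}$ is standard. By Serre duality, $dj_{[C]}$ is the transpose of $m\colon \operatorname{Sym}^2 H^0(K_C)\to H^0(2K_C)$, giving $\ker dj_{[C]}\cong \operatorname{coker}(m)^\ast$. For $[C]$ outside the hyperelliptic locus $\mathcal{H}_g$ (and trivially for $g\leq 2$), Noether's theorem ensures $m$ is surjective, hence $\ker dj_{[C]} = 0$. For $[C]\in \mathcal{H}_g$ with $g\geq 3$, the involution $\iota^\ast$ splits $H^0(2K_C) = H^0(2K_C)^+\oplus H^0(2K_C)^-$ with $\operatorname{Image}(m) = H^0(2K_C)^+$ of dimension $2g-1$, hence $\ker dj_{[C]}\cong H^0(2K_C)^{-\ast}$ has dimension $g-2$. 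Dually $T_{[C]}\mathcal{H}_g = H^1(T_C)^+$ and $\ker dj_{[C]} = H^1(T_C)^-$; these are complementary in $T_{[C]}\M_g$, so the zero-set of the quadratic form at a hyperelliptic point consists precisely of the tangent vectors transverse to $\mathcal{H}_g$.

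The main step is the injectivity of $d\Theta$ on $\operatorname{Image}(dj_{[C]}) = \operatorname{Ann}(\ker m)\subset \operatorname{Sym}^2 H^0(K_C)^\ast$. The heat equation (as in the proof of Theorem \ref{prop:debarisfs}) shows that $\xi\in T_\tau\mathbb{H}_g$ lies in $\ker d\Theta$ iff there is a scalar $\lambda$ with $\xi(\partial^2)\theta_u(0;\tau) = \lambda\,\theta_u(0;\tau)$ for every $u\in U$, equivalently the functional $s\mapsto \xi(\partial^2)s(0)$ on $H^0(J(C), 2\Theta)$ is a scalar multiple of evaluation at the origin. A standard dualization then reduces the desired disjointness $\operatorname{Image}(dj)\cap\ker d\Theta = 0$ to the surjectivity of
\begin{equation*}
\Phi\colon H^0(J(C),\,2\Theta)\ \longrightarrow\ \mathbb{C}\oplus H^0(C,\,2K_C),\qquad s\ \longmapsto\ \bigl(s(0),\,m(\operatorname{Hess}_0 s)\bigr)
\end{equation*}
onto $\mathbb{C}\oplus \operatorname{Image}(m)$.

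The main obstacle is this surjectivity. Via $\phi\colon C\times C\to J(C)$ and $\phi^\ast(2\Theta)\cong K_S(2\Delta)$ from \cite{BGV}, the map $\Phi$ is naturally identified with the restriction $\phi^\ast|_{3\Delta}\colon H^0(J,2\Theta)\to H^0\bigl(3\Delta,\,K_S(2\Delta)|_{3\Delta}\bigr)$; a local computation using $\phi(x,y)\approx (u-v)(\omega_1(x_0),\dots,\omega_g(x_0))$ together with the evenness of every section of $2\Theta$ yields $\phi^\ast s \equiv s(0) + \tfrac12 m(\operatorname{Hess}_0 s)(u-v)^2\pmod{(u-v)^4}$ in the natural trivialization. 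The distinguished line $\mathbb{S}\subset H^0(J,2\Theta)$ hits the $\mathbb{C}$-summand via $\phi^\ast\mathbb{S} = \mathbb{C}\cdot\sigma_C$ with $\sigma_C|_{2\Delta} = s^C\neq 0$. For the $\operatorname{Image}(m)$-summand, one can use the family of sections $s_{p,q}(z) := \theta\bigl(z+\alpha(p)-\alpha(q)\bigr)\,\theta\bigl(z-\alpha(p)+\alpha(q)\bigr)\in H^0(J,2\Theta)$ indexed by $(p,q)\in C\times C$, for which $s_{p,q}(0) = \theta(\alpha(p)-\alpha(q))^2$ and $m(\operatorname{Hess}_0 s_{p,q})/s_{p,q}(0) = 2\,m\bigl(\operatorname{Hess}_{\alpha(p)-\alpha(q)}\log\theta\bigr)$. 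Classical Fay-type identities then show that as $(p,q)$ varies the resulting quadratic differentials span $\operatorname{Image}(m)$, completing the surjectivity and hence the proof.
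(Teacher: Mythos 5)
Your overall framework coincides with the paper's: both arguments start from Theorem \ref{prop:debarisfs}, use that the radical of $F^\ast\omega_{FS}$ at a point is exactly $\ker dF$, and then combine the immersivity of $\Theta$ at Jacobian points with the standard description of $\ker dj$ (Noether's theorem off the hyperelliptic locus; $\ker dj=H^1(T_C)^-$ complementary to $T_{[C]}\mathcal{H}_g=H^1(T_C)^+$ at hyperelliptic points). The divergence is in the key input. The paper simply quotes the theorem of van Geemen--van der Geer that $\Theta$ is locally an immersion at indecomposable principally polarized abelian varieties (\cite[p.~620]{vgvg}), notes that Jacobians are indecomposable, and stops there. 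You instead try to prove the needed injectivity of $d\Theta$ on $\operatorname{Im}(dj)$ from scratch; your heat-equation reduction to the surjectivity of $\Phi\colon s\mapsto(s(0),m(\operatorname{Hess}_0 s))$ onto $\mathbb{C}\oplus\operatorname{Im}(m)$ is correct (and the cited result actually gives the stronger statement that $s\mapsto(s(0),\operatorname{Hess}_0 s)$ is onto $\mathbb{C}\oplus\operatorname{Sym}^2H^0(K_C)$).

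The gap is in your proof of that surjectivity, which is where all the difficulty lives. First, you argue summand by summand: producing one section with $s(0)\neq 0$ and a family whose Hessians ``span $\operatorname{Im}(m)$'' only shows that the image of $\Phi$ surjects onto each factor separately, not that it equals $\mathbb{C}\oplus\operatorname{Im}(m)$ (the diagonal in $\mathbb{C}\oplus\mathbb{C}$ surjects onto both factors). Since your sections $s_{p,q}$ satisfy $s_{p,q}(0)=\theta(\alpha(p)-\alpha(q))^2\neq 0$ generically, what you actually need is that the differences $m(\operatorname{Hess}_{e}\log\theta)-m(\operatorname{Hess}_{e'}\log\theta)$ for $e,e'\in\phi(C\times C)$ span $\operatorname{Im}(m)$, i.e.\ that sections of $2\Theta$ vanishing at the origin already have Hessians mapping onto $\operatorname{Im}(m)$. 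Second, and more seriously, the assertion that ``classical Fay-type identities show the resulting quadratic differentials span $\operatorname{Im}(m)$'' is precisely the hard point and is left unproved; it is not an off-the-shelf consequence of Fay's formulas, the more so because you restrict $e$ to the surface $\phi(C\times C)\subset J(C)$ rather than to all of $J(C)\setminus\Theta$. Unless you supply this spanning argument, you should do what the paper does and invoke \cite{vgvg} together with the indecomposability of Jacobians.
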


\begin{proof}
The Fubini--Study metric on $\mathbb{P}^{2^g-1}$ (see \eqref{eq:omegafs}) is nondegenerate everywhere,
and the map $\Theta_g$ in \eqref{tau2} is locally an immersion outside the 
locus of points associated to decomposable abelian varieties. The latter is a well-known result; see for 
example \cite[p.~620]{vgvg} (the map $\Theta_g$ is called $Th$ there). The Jacobian of a Riemann surface is 
indecomposable, and therefore $$\debar \beta^\theta(v,\overline{v})\ =\ 8\pi j^\ast \Theta_g^\ast 
\omega_{FS}(v,\overline{v})\ =\ 0$$ if and only if $\mathrm{d}j(v)\,=\,0$. This completes the proof because 
$j$ is an immersion outside the hyperelliptic locus, and it is an immersion when restricted to the 
hyperelliptic locus.
\end{proof}

\begin{remark}
\label{rem:uniformization}
From Proposition \eqref{lp} it follows immediately that the projective structure $\beta^\theta$ is different from
the projective structure given by Uniformization theorem. To see this, denote by $\beta^u$ the projective structure given
by the Uniformization theorem. It is proven in \cite{ZT} that $\debar \beta^u$ is equal to the Weil-Petersson metric, up
to a scalar multiple. Since the Weil-Petersson metric is
nondegenerate everywhere, we conclude that $\debar\beta^\theta\,
\neq\, \debar\beta^u$ on the hyperelliptic locus.
\end{remark}

We recall that $\debar \beta^\eta$ was determined up to a nonzero scalar multiple
(see \cite[Theorem 4.4]{bcfp}), and the scalar multiple was found in \cite[Theorem 5.6]{BGT}.
We present another derivation of it by an elementary computation.

\begin{proposition}\label{prop:debareta}
On the Teichm\"uller space the following holds:
\begin{equation}\label{eq:debaretabeta}
\overline{\partial} \beta^\eta\ =\ \pi j^\ast \omega_S,
\end{equation}
where
$$\omega_S\ =\ \frac{\sqrt{-1}}{2}\sum_{i,j,u,v=1}^g (\Im \tau)^{iu} (\Im \tau)^{jv} d\tau_{ij}
\wedge d\overline{\tau_{uv}}$$
is the Siegel metric on $\mathbb{H}_g$, and $j$ is the Torelli map (see \eqref{tau}).
\end{proposition}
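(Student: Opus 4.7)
The plan is to mimic the computation done for $\beta^\theta$ in Theorem \ref{prop:debarisfs}. Starting from the explicit expression \eqref{eq:eta} for $\eta$, I apply the morphism $\mathbf{r}$ of \eqref{eqr}: Lemma \ref{lemma:multiplication} identifies $\mathbf{r}(p_1^\ast \omega_i \wedge p_2^\ast \omega_j)$ with the product $\omega_i \omega_j$, viewed as a section of $\pi_\ast \mathcal{K}^{\otimes 2} \subset \mathcal{V}$, while $\mathbf{r}(\Omega)(b) = \Omega_b|_{3\Delta}$ by construction. Hence
$$
\beta^\eta(b) \;=\; \Omega_b|_{3\Delta} \;-\; \pi \sum_{i,j=1}^g \bigl(\Im j(b)\bigr)^{ij}\, \omega_i(b)\,\omega_j(b).
$$

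I then invoke the identification $\pi_\ast \mathcal{K}^{\otimes 2} \cong T^\ast B$ given by $-\omega_i\omega_j \longmapsto j^\ast(d\tau_{ij})$ from Lemma \ref{lemma:lambda}, exactly as in Theorem \ref{prop:debarisfs}. Writing $Y := \Im \tau$ with inverse matrix $(Y^{ij})$, this rewrites the previous formula as
$$
\beta^\eta \;=\; \mathbf{r}(\Omega) \;+\; \pi\, j^\ast\!\left(\sum_{i,j=1}^g Y^{ij}\, d\tau_{ij}\right).
$$
Since $\Omega$ is a holomorphic section of $\Pi_\ast \mathbb{L}$ (so that $\mathbf{r}(\Omega)$ is holomorphic) and $j$ is holomorphic, applying $\debar$ yields $\debar\beta^\eta = \pi\, j^\ast\, \debar\bigl(\sum_{ij} Y^{ij}\, d\tau_{ij}\bigr)$, and the proof reduces to a direct computation on $\mathbb{H}_g$.

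For this final computation, $Y = (\tau - \bar\tau)/(2\sqrt{-1})$ gives $\debar Y_{kl} = (\sqrt{-1}/2)\, d\bar\tau_{kl}$, and differentiating the identity $Y Y^{-1} = I$ yields $\debar Y^{ij} = -(\sqrt{-1}/2)\sum_{k,l} Y^{ik} Y^{jl}\, d\bar\tau_{kl}$, using the symmetry of $Y^{-1}$. Substituting and applying $d\bar\tau_{kl}\wedge d\tau_{ij} = -d\tau_{ij}\wedge d\bar\tau_{kl}$ produces
$$
\debar\!\left(\sum_{i,j} Y^{ij}\, d\tau_{ij}\right) \;=\; \frac{\sqrt{-1}}{2}\sum_{i,j,u,v} Y^{iu} Y^{jv}\, d\tau_{ij}\wedge d\bar\tau_{uv} \;=\; \omega_S,
$$
so $\debar \beta^\eta = \pi\, j^\ast \omega_S$ as claimed. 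The main (essentially bookkeeping) obstacle is keeping signs straight: the minus sign hidden in the identification $-\omega_i\omega_j \mapsto j^\ast d\tau_{ij}$, the minus sign from $d(Y^{-1}) = -Y^{-1}(dY)Y^{-1}$, and the wedge flip $d\bar\tau \wedge d\tau = -d\tau\wedge d\bar\tau$ must conspire to give $+\pi\, j^\ast\omega_S$ rather than its opposite; one should also check that $\beta^\eta$ and $\omega_S$ are both summed with the same ordered-index convention so that off-diagonal doubling is consistent on the two sides.
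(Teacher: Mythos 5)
Your proof is correct and follows essentially the same route as the paper: restrict $\eta$ via $\mathbf{r}$ and Lemma \ref{lemma:multiplication}, identify $-\omega_i\omega_j$ with $j^\ast(d\tau_{ij})$ via Lemma \ref{lemma:lambda}, kill the holomorphic term $\mathbf{r}(\Omega)$ under $\debar$, and compute $\debar\bigl(\sum_{i,j}(\Im\tau)^{ij}\,d\tau_{ij}\bigr)=\omega_S$. The only (cosmetic) difference is that you use the matrix identity $d(Y^{-1})=-Y^{-1}(dY)Y^{-1}$ with all indices summed freely, whereas the paper tracks the symmetric-coordinate convention $u\geq v$ on $\mathbb{H}_g$ explicitly; both bookkeeping schemes land on the same answer.
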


\begin{proof}
Recall that by definition $\beta^\eta\,=\, \mathbf{r}(\eta)$. The section $\mathbf{r}(\Omega)$,
where $\mathbf r$ is the map in \eqref{eqr}, sends any
$b\, \in\, B$ to $(\Omega_b)\big\vert_{3\Delta}$, and the section
$\mathbf{r}(p_1^\ast \omega_i\wedge p_2^\ast \omega_j)\,=\,
\mathbf{m}(p_1^\ast \omega_i\wedge p_2^\ast \omega_j)$ sends any $b\, \in\, B$ to
$\omega_i(b)\omega_j(b)$ by Lemma \ref{lemma:multiplication}.

Using Lemma \ref{lemma:lambda}, identify $\omega_i(b)\omega_j(b)$ with
$-j^\ast(d\tau_{ij})(b)$. Then \eqref{eq:eta} gives the following:
\begin{equation}
\label{eq:betaeta}
\beta^\eta(b)\,=\,(\eta_b)\big\vert_{3\Delta}\,=\,\Omega_b\big\vert_{3\Delta}+\pi \sum_{i,j=1}^g (\Im j(b))^{ij}
j^\ast (d \tau_{ij})(b),
\end{equation}
and from this it is deduced that
\begin{equation}
\label{eq:2.8}
\overline\partial\beta^\eta\,\,=\,\, \pi\cdot j^\ast\left(\debar \sum_{i,j=1}^g (\Im \tau)^{ij} d \tau_{ij}\right)
\,\,=\,\,-\pi\cdot j^\ast\left(\sum_{i,j=1}^g\sum_{u\geq v}
\frac{\partial (\Im \tau)^{ij}}{\partial\, \overline{\tau_{uv}}}
d\tau_{ij}\wedge d\overline{\tau_{uv}}\right).
\end{equation}

Now,
\begin{equation}
\label{eq:debartau}
\frac{\partial (\Im \tau)^{ij}}{\partial\, \overline{\tau_{uv}}}\,=\,
- \sum_{h,l=1}^g (\Im \tau)^{ih} \frac{\partial (\Im \tau)_{hl}}{\partial\, \overline{\tau_{uv}}}(\Im \tau)^{lj}
\,=\,-\frac{\sqrt{-1}}{2}\sum_{h,l} (\Im \tau)^{ih} \frac{\partial (\overline{\tau}_{hl}-\tau_{hl})}{\partial\, \overline{\tau_{uv}}}(\Im \tau)^{lj}.
\end{equation}

Recalling that the coordinates on $\mathbb{H}_g$ are $\tau_{uv}$ with $u\,\geq\, v$, we have 
$$\frac{\partial\, \overline{\tau_{hl}}}{\partial\, \overline{\tau_{uv}}}\ =\ \begin{cases}
\delta_{hu}\delta_{lv}+\delta_{hv}\delta_{lu} & u\,>\,v\\
\delta_{hu}\delta_{lv} & u\,=\,v.
\end{cases}$$
Comparing these equations with \eqref{eq:debartau} yields
$$\frac{\partial (\Im \tau)^{ij}}{\partial\, \overline{\tau_{uv}}}\ =\ \begin{cases}
-\frac{\sqrt{-1}}{2}(\Im \tau)^{iu}(\Im \tau)^{vj}-\frac{\sqrt{-1}}{2}(\Im \tau)^{iv}(\Im \tau)^{vj} & u\,>\, v\\
-\frac{\sqrt{-1}}{2}(\Im \tau)^{iu}(\Im \tau)^{vj} & u\,=\,v.
\end{cases}$$
Plugging these into \eqref{eq:2.8} and using the fact that $\tau$ is symmetric, the proof of the proposition is completed.
\end{proof}

\section{Degenerating the metric to $\Delta_1$}\label{differ-debar}

We now prove that $\beta^\theta$ and $\beta^\eta$ (see \eqref{pr-strs}) are different for all genera. 
This will be done using degeneration to a nodal curve.

The Siegel space $\mathbb{H}_g$ has the pullback of the Fubini-Study metric of $\mathbb{P}^{2^g-1}$ ($w$ is the same as in \eqref{eq:wudoppio})
\begin{equation}\label{eq:omegatheta}
\omega_g^\Theta \ :=\ \Theta_g^\ast \,\omega^{FS}_{\mathbb{P}^{2^g-1}} \ =\ \frac{\sqrt{-1}}{2}\partial\debar \log w,
\end{equation}
where $\Theta_g$ is the map in \eqref{tau2}. Let 
\begin{equation}\label{eq:omegasiegel}
\omega_{g}^S \, :=\, \frac{\sqrt{-1}}{2}\sum_{i,j,l,h=1}^g (\Im \tau)^{il}(\Im \tau)^{jh} d \tau_{ij}\wedge d\overline{\tau_{lh}}
\end{equation}
be the Siegel metric on $\mathbb{H}_g$.

It is well known that $\omega_g^S$ descends to $\A_g$, and we proved in section \ref{sect:descend} that $\omega_g^\Theta$ descends
to $\A_g$ as well. In the following, the metrics on $\mathbb{H}_g$ that descend to $\A_g$ and the metrics they induce on $\A_g$
will be denoted --- by an abuse of notation --- with the same symbol. In Section \ref{sec:theta-proj-struct} it has 
been established that the pullbacks of $8\pi \omega_g^\Theta$ and $\pi\omega_g^S$ to $\Mg$ (through the Torelli map)
correspond to $\overline{\partial}\beta^\theta_g $ and $\overline\partial \beta^\eta_g$ respectively (see \eqref{eq:debarbetatheta}
and \eqref{eq:debaretabeta}).

The following Lemma is a stronger version of \cite[Corollary 6.12]{BGV}. Its proof uses some 
computations done in that paper.

\begin{lemma}\label{lemma:differdebar1}
On $\mathbb{H}_1$,
$$8\pi \omega_1^\Theta\ \neq\ \pi \omega_1^S,$$
or equivalently, $\debar \beta^\theta_1\,\neq\, \debar \beta^\eta_1$ on $\M_{1,1}$.
\end{lemma}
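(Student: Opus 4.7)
Since both $\omega_1^\Theta$ and $\omega_1^S$ descend to $\A_1 = \mathbb{H}_1/\operatorname{SL}(2,\mathbb{Z}) = \M_{1,1}$ (the former by Proposition \ref{prop:descends}), the two formulations in the lemma are equivalent, and it suffices to disprove the identity $8\pi\omega_1^\Theta = \pi\omega_1^S$ of $(1,1)$-forms on $\mathbb{H}_1$. Writing $\tau = x+iy$ and noting that in genus one $U = \{0,1/2\}$, set $w(\tau) = |\theta_0(0;\tau)|^2 + |\theta_{1/2}(0;\tau)|^2$. Since $\omega_1^S = \frac{\sqrt{-1}}{2y^2}\, d\tau\wedge d\bar{\tau}$ and $\omega_1^\Theta = \frac{\sqrt{-1}}{2}\,\partial\bar{\partial}\log w$, the postulated equality becomes the pointwise identity
$$
\frac{\partial^2\log w}{\partial\tau\,\partial\bar{\tau}}\,=\,\frac{1}{8y^2}
$$
on $\mathbb{H}_1$.

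The plan is to falsify this identity by asymptotics at the cusp $y\to\infty$. From the $q$-expansions $\theta_0(0;\tau) = 1 + 2q + O(q^4)$ and $\theta_{1/2}(0;\tau) = 2q^{1/4} + O(q^{9/4})$ with $q = e^{2\pi\sqrt{-1}\tau}$, a direct expansion gives
$$
w(\tau)\,=\,1 + 4 e^{-\pi y} + 4 e^{-2\pi y}\cos(2\pi x) + O(e^{-3\pi y})
$$
uniformly in $x$. Taking logarithms and using $\partial^2/(\partial\tau\,\partial\bar{\tau}) = \tfrac{1}{4}(\partial_x^2+\partial_y^2)$, together with the harmonicity of $e^{-2\pi y}\cos(2\pi x) = \operatorname{Re}(q)$, the leading $x$-independent term $4 e^{-\pi y}$ produces
$$
\frac{\partial^2\log w}{\partial\tau\,\partial\bar{\tau}}(\tau)\,=\,\pi^2 e^{-\pi y} + O(e^{-2\pi y}).
$$
Multiplying by $8y^2$ shows that $8y^2\,\partial^2\log w/(\partial\tau\,\partial\bar{\tau}) \to 0$ as $y\to\infty$, contradicting the value $1$ that would be required by the supposed identity. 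Hence $8\pi\omega_1^\Theta\neq\pi\omega_1^S$, equivalently $\overline{\partial}\beta^\theta_1 \neq \overline{\partial}\beta^\eta_1$ on $\M_{1,1}$.

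The only technical point is confirming that the exponential decay of $\log w$ survives after twice differentiating. The potential obstacle would be a cancellation killing the leading $\pi^2 e^{-\pi y}$ contribution, but this is avoided for structural reasons: the term $4e^{-\pi y}$ in $w$ is $x$-independent, so its mixed derivative is nonzero; and the only $x$-dependence at the next order is the harmonic piece $\operatorname{Re}(q)$, which automatically drops out of $\partial\bar{\partial}$. Beyond this elementary bookkeeping of the first two orders of the $q$-expansion, no subtle obstacle is anticipated.
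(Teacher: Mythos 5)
Your proof is correct, but it takes a genuinely different route from the paper's. The paper works at the single interior point $\tau=\sqrt{-1}$: using $w_x(\sqrt{-1})=0$ and $w_y(\sqrt{-1})=-w(\sqrt{-1})/2$ (the latter encoding $\sigma(\sqrt{-1})=\eta(\sqrt{-1})$), it reduces the pointwise inequality to an explicit numerical statement $4\pi c\neq 7b$ about the theta constants $a=\sum_m e^{-\pi m^2}$, $b=\sum_m m^2e^{-\pi m^2}$, $c=\sum_m m^4 e^{-\pi m^2}$, settled by $4\pi>7$ and $c>b$. You instead compare the two forms asymptotically at the cusp $y\to\infty$, where $\partial_\tau\partial_{\bar\tau}\log w\sim \pi^2 e^{-\pi y}$ decays exponentially while $\tfrac{1}{8y^2}$ decays only polynomially; your expansion of $w$ agrees with the paper's own formula \eqref{eq:back1}, whose $(m,n)=(0,0)$, $(\pm1,0),(0,\pm1)$ and $(\pm1,\pm1)$ terms give exactly $1+4e^{-\pi y}+4e^{-2\pi y}\cos(2\pi x)+O(e^{-4\pi y})$, and term-by-term differentiation of the tail is justified by the locally uniform absolute convergence of the differentiated theta series. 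Your argument buys a stronger and more structural conclusion --- the two forms differ at \emph{every} point with $y$ large, indeed their ratio tends to $0$ near the cusp --- and avoids the special identities at $\tau=\sqrt{-1}$; the paper's computation, by contrast, exhibits an explicit interior point of $\M_{1,1}$ where the forms differ and requires only finitely many evaluations rather than an asymptotic expansion. Both are elementary and both suffice for the degeneration argument of Theorem \ref{teo:differ}.
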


\begin{proof}
Let $z\,=\,x+y\sqrt{-1}$ be the natural coordinate on $\mathbb{H}_1$. Denote by $f_x$ the partial derivative of $f$ with respect to $x$. Note that 
$$8\pi \omega_1^\Theta \, =\, 8\pi \Theta_1^\ast \, \omega^{FS}_{\mathbb{P}^1} \, =\, 4\pi \sqrt{-1} \partial\debar \log w \, =\,
4\pi \sqrt{-1}\, \frac{w(w_{xx}+w_{yy})-w_x^2-w_y^2}{w^2}\, dz\wedge d\overline{z},$$
$$\pi \omega_1^S \ =\ \frac{\pi \sqrt{-1}}{2 y^2} \, dz\wedge d\overline{z}.$$
Comparing these two equations we obtain the following equivalence of statements:
\begin{equation}
\label{eq:inequality}
\left\{8 \pi \omega_1^\Theta(z) \,\neq\, \pi \omega_1^S(z)\right\}\, \iff\, \left\{w(z)(w_{xx}(z)+w_{yy}(z))-w_x^2(z)-w_y^2(z)
\,\neq \,\frac{w(z)^2}{2y^2}\right\}.
\end{equation}

We shall see by direct computation that the inequality in the right hand side of \eqref{eq:inequality} is valid for $x\,=\,0,\,y\,=\,1$.
This will prove the lemma.

We recall some useful formulae:
\begin{equation}
\label{eq:back1}
w(x+\sqrt{-1}y)\ =\ \sum_{m,n\in\mathbb{Z}}\exp(2\pi \sqrt{-1}xmn-\pi y(m^2+n^2)),
\end{equation}
\begin{equation}
\label{eq:back2}
\begin{cases}
w_x(\sqrt{-1})\ =\ 0,\\
w_y(\sqrt{-1})\ =\ - w(\sqrt{-1})/2.
\end{cases}
\end{equation}
These are proven in \cite[pp.~24--25]{BGV}. Equation \eqref{eq:back1} can be seen as a Corollary of the
Addition Formula \cite[p.~139]{Igusa}, while \eqref{eq:back2} corresponds to the fact that $\sigma(\sqrt{-1})\,=\,\eta(\sqrt{-1})$.

Let
$$a\,=\, \sum_{m\in\mathbb{Z}} e^{-\pi m^2},\ \ \ b\,=\, \sum_{m\in\mathbb{Z}} m^2e^{-\pi m^2},\ \ \ c\,=\,\sum_{m\in\mathbb{Z}} m^4e^{-\pi m^2}.$$
{}From \eqref{eq:back1} it follows that
\begin{gather*}
w_y(\sqrt{-1})\,=\,-2\pi ab,\qquad w_{xx}(\sqrt{-1})\,=\,-4\pi^2b^2,\\
w(\sqrt{-1})\,=\,a^2,\qquad w_{yy}(\sqrt{-1})\,=\,2\pi^2b^2+2\pi^2 ac.
\end{gather*}
With these notation \eqref{eq:back2} says that $-2\pi ab\,=\,-a^2/2$, and therefore we have $a\,=\,4\pi b$.
 
Simplifying \eqref{eq:inequality} --- using \eqref{eq:back2} and $a\,=\,4\pi b$ --- the following equivalences of statements are obtained:
\begin{gather*}
\left\{\debar \beta^\theta (\sqrt{-1})\,\neq\, \debar \beta^\eta(\sqrt{-1})\right\}\ \, \iff\\
\iff\ \left\{w(\sqrt{-1})(w_{xx}(\sqrt{-1})+w_{yy}(\sqrt{-1}))-w_y^2(\sqrt{-1})\,\neq\, \frac{w^2(\sqrt{-1})}{2}\right\}\ \iff\\
\iff\ \left\{a^2(2\pi^2ac-2\pi^2b^2)-4\pi^2a^2b^2\,\neq\,\frac{a^4}{2}\right\} \ \iff\ \left\{2\pi^2 ac\,\neq\,
\frac{a^2}{2}+6\pi^2 b^2\right\}\ \iff\\
\iff\ \left\{2\pi^2 ac\,\neq\, \frac{a^2}{2}+\frac{6}{16}a^2\right\}\ \iff\ \left\{ 16\pi^2 c\,\neq\, 7 a\right\}\
\iff\ \left\{4\pi c\,\neq\, 7b\right\};
\end{gather*}
the last inequality holds because $4\pi\,>\,7$ and $c\,>\,b$.
\end{proof}

Fix a matrix $\tau'\,\in\,\mathbb{H}_{g-1}$. The morphism
\begin{equation}
\label{eq:iota}
\iota\,:\,\mathbb{H}_1\,\lra\, \mathbb{H}_g, \qquad \tau \,\longmapsto\, \begin{pmatrix}
\tau & 0\\
0 & \tau'
\end{pmatrix}.
\end{equation}
defines an embedding of $\mathbb{H}_1$ into $\mathbb{H}_g$. We will now study the behaviour of $\omega_g^\Theta$ and $\omega_g^S$ with respect to
this map $\iota$.

\begin{lemma}\label{lemu}
Take $\tau'\,\in\,\mathbb{H}_g$ and let $\iota\,:\,\mathbb{H}_1\,\lra\, \mathbb{H}_g$ be as in \eqref{eq:iota}. Then there exists a unitary automorphism
$p$ of $\mathbb{P}^{2^g-1}$ such that the diagram
$$\begin{tikzcd}
\mathbb{H}_1 \arrow[rd, "\Theta_1"'] \arrow[r, "\iota"] & \mathbb{H}_g \arrow[r, "\Theta_g"] & \mathbb{P}^{2^g-1} \\
& \mathbb{P}^1 \arrow[ru, "p\,\circ \,e"'] &
\end{tikzcd}$$
commutes, where $e\,:\,\mathbb{P}^1\, \longrightarrow\, \mathbb{P}^{2^g-1}$ is the natural inclusion map.
\end{lemma}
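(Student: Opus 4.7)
The plan is to exploit the block-diagonal form of $\iota(t)$ to factorise each second order theta constant on $\iota(\mathbb{H}_1)$ as a product. Split $U\,=\,U_1\times U_{g-1}$, so that every $u\,\in\, U$ decomposes uniquely as $u\,=\,(u_1,\,u')$ with $u_1\,\in\, U_1$ and $u'\,\in\, U_{g-1}$. Because $\iota(t)$ is block-diagonal, the quadratic form in the exponent of \eqref{eq:secondorder} splits as $(m_1+u_1)^2 t\,+\,(m'+u')^{t}\tau'(m'+u')$, so the lattice sum over $\mathbb{Z}^g$ factors as a product of independent series over $\mathbb{Z}$ and $\mathbb{Z}^{g-1}$. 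Evaluation at $z\,=\,0$ then yields
$$\theta_{(u_1,u')}(0;\,\iota(t))\ =\ \theta_{u_1}(0;\,t)\cdot \theta_{u'}(0;\,\tau').$$
This factorisation is the core identity on which everything else rests.

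Consequently $\Theta_g\circ \iota$ is the projectivisation of $L\circ \widetilde{\Theta}_1$, where $\widetilde{\Theta}_1(t)\,=\,(\theta_0(0;t),\,\theta_{1/2}(0;t))\,\in\,\mathbb{C}^2$ is the natural lift of $\Theta_1$, and $L\,:\,\mathbb{C}^2\,\longrightarrow\, \mathbb{C}^{2^g}$ is the linear map
$$L(a,\,b)\ =\ \bigl(a\,\theta_{u'}(0;\,\tau')\bigr)_{u_1=0,\,u'\in U_{g-1}}\,\oplus\,\bigl(b\,\theta_{u'}(0;\,\tau')\bigr)_{u_1=1/2,\,u'\in U_{g-1}}.$$
Using the orthogonal decomposition of $\mathbb{C}^{2^g}$ induced by the splitting $u_1\,=\,0$ versus $u_1\,=\,1/2$, a one-line computation gives
$$\langle L(a,\,b),\,L(c,\,d)\rangle\ =\ (a\overline{c}+b\overline{d})\sum_{u'\in U_{g-1}}\bigl|\theta_{u'}(0;\,\tau')\bigr|^2.$$
The scalar on the right equals the squared norm of the standard homogeneous lift of $\Theta_{g-1}(\tau')$, and is strictly positive since the sections $\theta_{u'}$ span the base-point-free linear system $|H^0(J,\,2\Theta)|$ and therefore cannot all vanish at $0$. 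Thus $L$ is a positive scalar multiple of a linear isometry from $\mathbb{C}^2$ onto its image.

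To produce the required $p$, identify $e$ with the projectivisation of the canonical isometric embedding $\mathbb{C}^2\,\hookrightarrow\, \mathbb{C}^{2^g}$ as the span of the first two basis vectors; then rescale $L$ to a genuine isometry and extend it unitarily from $L(\mathbb{C}^2)$ to all of $\mathbb{C}^{2^g}$ by completing orthonormal bases on both sides. The resulting unitary projective automorphism $p$ of $\mathbb{P}^{2^g-1}$ will satisfy $p\circ e\,=\,[L]$, and combined with the factorisation above this gives $p\circ e\circ \Theta_1\,=\,\Theta_g\circ \iota$. I foresee no real obstacle: the content of the lemma is the multiplicativity of theta constants under block-diagonal period matrices, which is immediate from the lattice decomposition, while the passage from the linear map $L$ to a unitary $p$ is routine linear algebra.
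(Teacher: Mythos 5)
Your proposal is correct and follows essentially the same route as the paper: the factorisation $\theta_{(u_1,u')}(0;\iota(t))=\theta_{u_1}(0;t)\,\theta_{u'}(0;\tau')$ coming from the block-diagonal period matrix, followed by the observation that the induced linear map $\mathbb{C}^2\to\mathbb{C}^{2^g}$ (your $L$, the paper's matrix $A$) is a nonzero scalar multiple of an isometry and hence extends to a unitary automorphism inducing $p$. Your explicit Gram-matrix computation and the base-point-freeness justification that $\sum_{u'}|\theta_{u'}(0;\tau')|^2>0$ are slightly more detailed than the paper's row-permutation argument, but the content is identical.
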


\begin{proof}
For $r\,\in\, \mathbb{C}^g$ denote by $r_1\in \mathbb{C}$ and $r_2\in \mathbb{C}^{g-1}$ the vectors that satisfy
$r\,=\, \begin{pmatrix}r_1\\r_2\end{pmatrix}$. A similar notation will be used for elements of $\frac{1}{2}\mathbb{Z}_2^g
\,=\, U$. Since $\iota(\tau)$ is in block form, it follows that
\begin{equation}\label{eq:splittheta}
\theta_u(0;\iota(\tau))\ =\ \sum_{\xi \in \mathbb{Z}^g} \exp(2\pi i (\xi+u)^t \iota(\tau) (\xi+u))
\end{equation}
\[
=\ \sum_{\xi_1\in \mathbb{Z}}\exp(2\pi i (\xi_1+u_1)^t \tau (\xi_1+u_1))\sum_{\xi_2\in \mathbb{Z}^{g-1}}
\exp(2\pi i (\xi_2+u_2)^t \tau' (\xi_2+u_2))
\ =\ \theta_{u_1}(0;\tau)\theta_{u_2}(0;\tau').
\]
Consider the matrix $A\,\in\, \mathbb{C}^{2^g\times 2}$ (with rows indexed by $U$ and columns indexed by $(\frac{1}{2}\mathbb{Z})/\mathbb{Z}$)
\begin{equation}
\label{eq:matdef}
A_{u,v}\ =\ \begin{cases}
\theta_{u_2}(0;\tau') & \,\text{if }\, v\,=\,u_1\\
0 & \,\text{otherwise,}
\end{cases}
\end{equation}
and the associated linear map $A\,:\,\mathbb{C}^2\,\lra\, \mathbb{C}^{2^g}$. The map $f\,:\,\mathbb{P}^1\,\lra\, \mathbb{P}^{2^g-1}$ induced by $A$ satisfies
the condition $\Theta_g\circ \iota\,=\,f\circ \Theta_1$; indeed, this follows from \eqref{eq:splittheta} and the definition of $A$ in \eqref{eq:matdef}. 
Up to a permutation of the rows, $A$ is of the form
$$\begin{bmatrix}
a_1 & 0\\
0 & a_1\\
\vdots & \vdots\\
a_{2^g} & 0\\
0 & a_{2^g}
\end{bmatrix}$$ for some $a_1,\,\cdots,\, a_{2^g}\,\in\,\C$, not all zero. So $A$ may be completed to a matrix $U
\,\in\, \mathbb{C}^{2^g\times 2^g}$ which is a nonzero multiple of a unitary matrix. If $p\,:\,\mathbb{P}^{2^g-1}\,\longrightarrow\, \mathbb{P}^{2^g-1}$ is the automorphism
induced by $U$, then we have $f\,=\,p\circ e$. This completes the proof.
\end{proof}

\begin{corollary}\label{cor:thetaisometry}
Take $\tau'\,\in\,\mathbb{H}_g$, and let $\iota\,:\,\mathbb{H}_1\,\lra\, \mathbb{H}_g$ be as in \eqref{eq:iota}. Then
$\iota^*\om_g^\Theta\,=\, \om^\Theta_1$.
\end{corollary}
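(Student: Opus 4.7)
The plan is to read off the corollary as an almost immediate consequence of Lemma \ref{lemu}, together with two elementary facts about the Fubini-Study metric: that it is invariant under unitary automorphisms of projective space, and that the standard linear embedding $e\colon \mathbb{P}^1\hookrightarrow \mathbb{P}^{2^g-1}$ pulls back the Fubini-Study form on $\mathbb{P}^{2^g-1}$ to the Fubini-Study form on $\mathbb{P}^1$.

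First, by Lemma \ref{lemu} there is a unitary automorphism $p$ of $\mathbb{P}^{2^g-1}$ such that
$$\Theta_g\circ \iota\ =\ p\circ e\circ \Theta_1.$$
Applying pullback and using functoriality,
$$\iota^\ast \omega_g^\Theta\ =\ \iota^\ast \Theta_g^\ast\, \omega^{FS}_{\mathbb{P}^{2^g-1}}\ =\ \Theta_1^\ast\, e^\ast\, p^\ast\, \omega^{FS}_{\mathbb{P}^{2^g-1}}.$$
Since $p$ is unitary, it preserves the Fubini-Study form, so $p^\ast\, \omega^{FS}_{\mathbb{P}^{2^g-1}}\ =\ \omega^{FS}_{\mathbb{P}^{2^g-1}}$.

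The remaining step is to observe that the natural inclusion $e\colon\mathbb{P}^1\hookrightarrow\mathbb{P}^{2^g-1}$, sending $[z_0:z_1]$ to $[z_0:z_1:0:\cdots:0]$, satisfies $e^\ast\,\omega^{FS}_{\mathbb{P}^{2^g-1}}\ =\ \omega^{FS}_{\mathbb{P}^1}$. This is immediate from the expression \eqref{eq:omegafs}: in homogeneous coordinates $Z=(Z_1,\ldots,Z_{2^g})$ on $\mathbb{P}^{2^g-1}$, one has $\omega^{FS}_{\mathbb{P}^{2^g-1}}\,=\,\tfrac{\sqrt{-1}}{2}\partial\debar\log|Z|^2$, and along the image of $e$ only the first two coordinates are nonzero, so $e^\ast\log|Z|^2\,=\,\log(|Z_1|^2+|Z_2|^2)$, which is precisely the potential defining $\omega^{FS}_{\mathbb{P}^1}$.

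Combining these observations,
$$\iota^\ast \omega_g^\Theta\ =\ \Theta_1^\ast\, e^\ast\, \omega^{FS}_{\mathbb{P}^{2^g-1}}\ =\ \Theta_1^\ast\, \omega^{FS}_{\mathbb{P}^1}\ =\ \omega_1^\Theta,$$
as desired. There is no serious obstacle here; the corollary is really a formal consequence of Lemma \ref{lemu}, and the only point to verify is the standard fact that a linear embedding of projective spaces is an isometric embedding for the Fubini-Study metrics.
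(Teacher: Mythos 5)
Your proof is correct and follows essentially the same route as the paper: invoke Lemma \ref{lemu} to write $\Theta_g\circ\iota = p\circ e\circ\Theta_1$, then use that a unitary automorphism preserves $\omega_{FS}$ and that the linear inclusion $e$ pulls back $\omega^{FS}_{\mathbb{P}^{2^g-1}}$ to $\omega^{FS}_{\mathbb{P}^1}$. The only difference is that you spell out the (standard) verification of the latter fact via the potential $\log|Z|^2$, which the paper simply asserts.
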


\begin{proof}
We use the notation of Lemma \ref{lemu}. If $p\,:\,\mathbb{P}^n\,\longrightarrow\, \mathbb{P}^n$ is projectively
unitary, then
$$p^\ast\omega_{\mathbb{P}^n}^{FS}\,\ =\,\ \omega_{\mathbb{P}^n}^{FS},$$ and if $e\,:\,\mathbb{P}^n\,\longrightarrow\, \mathbb{P}^{n+k}$ is the canonical inclusion
map then $e^\ast \omega_{\mathbb{P}^{n+k}}^{FS}\,=\,\omega_{\mathbb{P}^n}^{FS}$. Now it is enough to note that $$\omega_1^\Theta \ =\ \Theta_1^\ast\,\omega_{\mathbb{P}^1}^{FS} \ =\ \Theta_1^\ast\,
e^\ast p^\ast\,\omega_{\mathbb{P}^{2^g-1}}^{FS} \ =\ \iota^\ast\,\Theta_g^\ast\,\omega_{\mathbb{P}^{2^g-1}}^{FS} \ =\ \iota^\ast\,\omega_g^\Theta.$$
This completes the proof.
\end{proof}

Proving the same property for the Siegel metric is simpler. Indeed, let $\tau_{ij}$ denote the coordinates in $\mathbb{H}_g$, and
let $z$ denote the coordinate on $\mathbb{H}_1$: it is
$$\iota^\ast d\tau_{ij}\ =\ \begin{cases}
dz & \, \text{ if }\,i\,=\,j\,=\,1\\
0 & \,\text{otherwise.}
\end{cases}$$
Therefore,
\begin{equation}
\label{eq:sisometry}
\iota^\ast \omega_g^S\,=\,\iota^\ast \left(\frac{\sqrt{-1}}{2}\sum_{i,j,u,v} (\Im \tau)^{iu} (\Im \tau)^{jv} d\tau_{ij} \wedge d\overline{\tau_{uv}}\right)
\,=\,\frac{\sqrt{-1}}{2} \frac{dz \wedge d\overline{ z}}{(\Im z)^2}\,=\,\omega_1^S.
\end{equation}

With these results at hand we are ready to show that $\debar \beta^\theta\,\neq\, \debar \beta^\eta$. We turn our attention to the moduli space $\Mg$.

Let $\Mgg$ denote the Deligne--Mumford compactification of $\Mg$, which we consider as a complex analytic orbifold. Let 
$$\Delta_0,\,\,\Delta_1,\,\,\cdots, \,\, \Delta_{\lfloor \frac{g}{2}\rfloor}$$ be the divisors giving the boundary of $\Mgg$; see for example \cite[p.~50]{Ha}.
It is well-known that the period map $j$ in \eqref{tau} extends to a morphism of algebraic varieties
\begin{gather*}
\overline{j}\ :\ \Mgg - \Delta_0\ \lra\ \A_g.
\end{gather*}
Fix a smooth curve $C'$ of genus $g-1$ and fix a point $p\,\in\, C'$.
Let
\begin{gather*}
\phi\ :\ \M_{1,1} = \A_1 \,\lra\, \Delta_1 -\Delta_0 \,\subset\, \Mgg-\Delta_0, \ \ \ [E]\,\longmapsto\, [C'\cup_p E]
\end{gather*}
where $p$ is a point of $C'$, and $C' \cup_p E$ is obtained by identifying
the origin of $E$ with the point $p$. 
Also consider the map
\begin{gather*}
\iota_E\ :\ \A_1 \,\lra\, \A_g, \ \ \ [E]\, \longmapsto\, [ E \times JC']. 
\end{gather*}
We have
\begin{equation}\label{eq:iotae}
\iota_E\ =\ \overline{j} \circ \phi.
\end{equation}
Moreover, if $\tau'$ is a period matrix for $C'$ and $\iota\,:\,\mathbb{H}_1\,\lra\, \mathbb{H}_g$ is defined as
in \eqref{eq:iota}, then we have the commutative diagram
$$\begin{tikzcd}
\mathbb{H}_1 \arrow[r, "\iota"] \arrow[d] & \mathbb{H}_g \arrow[d] \\
\A_1 \arrow[r, "\iota_E"] & \A_g .
\end{tikzcd}$$

\begin{theorem}\label{teo:differ}
The following holds on $\M_g$:
\begin{gather*}
\debar \beta_g^\theta \ \neq\ \debar \beta_g^\eta .
\end{gather*}
\end{theorem}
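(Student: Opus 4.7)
The plan is to argue by contradiction, combining the explicit formulas for $\debar \beta^\theta$ and $\debar \beta^\eta$ established in Section \ref{sec:theta-proj-struct} with the embedding $\phi\colon\M_{1,1}\,\lra\,\Delta_1-\Delta_0$ so as to reduce the genus-$g$ statement to the elliptic case handled in Lemma \ref{lemma:differdebar1}.

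Suppose for contradiction that $\debar \beta_g^\theta\,=\,\debar \beta_g^\eta$ on $\M_g$. By Theorem \ref{prop:debarisfs} and Proposition \ref{prop:debareta}, this is equivalent to
$$j^\ast\bigl(8\pi\, \omega_g^\Theta - \pi\, \omega_g^S\bigr)\,=\,0$$
on $\M_g$, where $j$ is the Torelli map. Since both $\omega_g^\Theta$ (by Proposition \ref{prop:descends}) and $\omega_g^S$ are defined on $\A_g$, and since $j$ extends to a holomorphic map $\overline{j}\colon\Mgg-\Delta_0\,\lra\,\A_g$, the smooth $(1,1)$-form $\overline{j}^\ast(8\pi\, \omega_g^\Theta - \pi\, \omega_g^S)$ is globally defined on $\Mgg-\Delta_0$ and vanishes on the open dense subset $\M_g$; by continuity it therefore vanishes on all of $\Mgg-\Delta_0$, in particular on $\Delta_1-\Delta_0$.

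Next I would pull back this vanishing along $\phi$. Using $\overline{j}\circ\phi\,=\,\iota_E$ from \eqref{eq:iotae} yields
$$\iota_E^\ast\bigl(8\pi\, \omega_g^\Theta - \pi\, \omega_g^S\bigr)\,=\,0\qquad\text{on }\A_1\,=\,\M_{1,1}.$$
By Corollary \ref{cor:thetaisometry} one has $\iota_E^\ast \omega_g^\Theta\,=\,\omega_1^\Theta$, and by equation \eqref{eq:sisometry} one has $\iota_E^\ast \omega_g^S\,=\,\omega_1^S$. Combining these forces $8\pi\, \omega_1^\Theta\,=\,\pi\,\omega_1^S$ on $\A_1$, contradicting Lemma \ref{lemma:differdebar1}.

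The substantive content of the argument is already packaged into earlier results: Corollary \ref{cor:thetaisometry} (which itself rests on the unitary factorization of Lemma \ref{lemu}) together with \eqref{eq:sisometry} matches the pullbacks of $\omega_g^\Theta$ and $\omega_g^S$ along $\iota_E$ with their genus-one counterparts, and Lemma \ref{lemma:differdebar1} supplies the explicit elliptic computation. The only potentially delicate point in the reduction itself is extending the vanishing from $\M_g$ to $\Delta_1-\Delta_0$, but this is routine since the two forms in question are pulled back from genuinely smooth forms on $\A_g$ via the holomorphic extension $\overline{j}$; continuity of a smooth form that vanishes on an open dense subset is all that is needed.
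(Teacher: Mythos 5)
Your proposal is correct and follows essentially the same route as the paper's own proof: contradiction, extension by continuity to $\Mgg-\Delta_0$, pullback along $\phi$ via $\overline{j}\circ\phi=\iota_E$, identification of the pullbacks with $\omega_1^\Theta$ and $\omega_1^S$, and the elliptic computation of Lemma \ref{lemma:differdebar1}. The only (cosmetic) difference is that the paper lifts the final comparison from $\A_1$ to $\mathbb{H}_1$ before invoking Corollary \ref{cor:thetaisometry} and \eqref{eq:sisometry}, which are stated for $\iota\colon\mathbb{H}_1\to\mathbb{H}_g$ rather than for $\iota_E$; since both forms descend, this changes nothing.
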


\begin{proof}
To prove this by contradiction assume that $\debar \beta_g^\theta\, =\, \debar \beta_g^\eta$.
Then \eqref{eq:debarbetatheta} and \eqref{eq:debaretabeta} imply that (recall the notation \eqref{eq:omegatheta}, \eqref{eq:omegasiegel})
$$j^\ast \left(8\pi \omega_g^\Theta-\pi \omega_g^S\right)\ =\ 0$$
on $\M_g$, and consequently by continuity 
$$\overline{j}^* ( 8\pi \om^\Theta_g - \pi \om_{g}^S)\ =\ 0$$
on $\Mgg - \Delta_0$. It follows that $\phi^*\overline{j}^* ( 8\pi \om^\Theta_g - \pi\om_{g}^S) \,=\, 0$,
which means --- by \eqref{eq:iotae} --- that
\begin{equation}
\label{eq:ona1}
\iota_E^\ast (8\pi \omega_g^\Theta) \ =\ \iota_E^\ast (\pi \omega_g^S)
\end{equation}
on $\A_1$. Now $\iota_E^\ast\,\omega_g^S$ is induced by $\iota^\ast\,\omega_g^S$ through the quotient
map $\mathbb{H}_1\,\lra\, A_1$. In the same way $\iota_E^\ast\,\omega_g^\Theta$ is induced by $\iota^\ast\,\omega_g^\Theta$. Then \eqref{eq:ona1} implies that 
$$8\pi\, \iota^* \om^\Theta_g \ =\ \pi\,\iota^* \om_{g}^S$$
on $\mathbb{H}_1$.
But we have proved that $\iota^*\om^\Theta_g \ =\ \om^\Theta_1$ and $\iota^*\om_{g}^S \ =\ \om_{1}^S$, so it follows that
\begin{gather*}
8\pi \om^\Theta_1 \ =\ \pi \om_{1}^S
\end{gather*}
on $\mathbb{H}_1$.
This contradicts the computation of Lemma \ref {lemma:differdebar1}. This completes the proof.
\end{proof}

\end{document}